\newcommand{\R}{\mathbb{R}}
\newcommand{\Z}{\mathbb{Z}}
\newcommand{\HH}{\mathbb{H}}
\newcommand{\SSS}{\mathbb{S}}
\def\HHH{{\rm H}}
\newcommand{\pa}{\partial}
\newcommand{\Ric}{{\rm Ric}}
\newcommand{\Ro}{{\rm R}}
\newcommand{\na}{\nabla}
\newcommand{\vertl}{\vert\hspace{.07em}}
\newcommand{\vertr}{\hspace{.07em}\vert}
\def\ringg#1{\accentset{\circ}{#1}}
\newcommand{\wh}{\widehat}
\newcommand{\ol}{\overline}
\newcommand{\Hyp}{\mathbb{H}}
\mathchardef\emptyset="001F
\definecolor{citegreen}{rgb}{0,0.6,0}
\definecolor{refred}{rgb}{0.8,0,0}
\theoremstyle{plain}
\newtheorem{theorem}{Theorem}[section]
\newtheorem{corollary}[theorem]{Corollary}
\newtheorem{proposition}[theorem]{Proposition}
\newtheorem{lemma}[theorem]{Lemma}
\theoremstyle{definition}
\theoremstyle{remark}
\newtheorem{remark}[theorem]{Remark}
\newtheorem{example}[theorem]{Example}
\numberwithin{equation}{section}
\title[Green functions and a PMT for asymptotically
hyperbolic 3-manifolds]{Green functions and a positive mass theorem for asymptotically
hyperbolic 3-manifolds}
\author[K.~Kr\"{o}ncke]{Klaus Kr\"{o}ncke}
\address{Institutionen f\"{o}r Matematik, KTH Stockholm, Lindstedtsv\"{a}gen 25, 10044 Stockholm, Sweden}
\email{kroncke@kth.se}
\author[F.~Oronzio]{Francesca Oronzio}
\address{Scuola Superiore Meridionale, Largo S. Marcellino, 10, 80138, Napoli, Italy}
\email{f.oronzio@ssmeridionale.it}
\author[A.~Pinoy]{Alan Pinoy}
\address{Département de Mathématique, Université Libre de Bruxelles, Bruxelles, Belgique}
\email{alan.pinoy@ulb.be}
\begin{document}

\subjclass[2020]{53C21, 31C12, 53C24, 53Z05.}

\keywords{monotonicity formulas; Green function; Riemannian $3$--manifold with scalar curvature lower bound; volume-renormalized mass.}

\begin{abstract}
We prove a new positive mass theorem for three-dimensional manifolds which are asymptotically hyperboloidal of order greater than $1$.
The mass quantity under consideration is the volume-renormalized mass recently introduced in a paper by Dahl, McCormick and the first author. The proof is based on a monotonicity formula holding along the level sets of the Green function for the Laplace operator centered at an arbitrary point. In order for this argument to work out, we require that the second homology of the manifold does not contain any spherical classes.
\end{abstract}

\maketitle

\section{Introduction}

One of the most fundamental statements in mathematical general relativity is the positive mass theorem. Its standard form states that the ADM-mass
\begin{align*}
m_{ADM}(g)\,=\,\lim_{r\to\infty} \int\limits_{\partial B_r}\!\langle\mathrm{div}_{\wh{g}}g-d\mathrm{tr}_{\wh{g}}g, \nu_{\,\wh{g}}\rangle_{\wh{g}}\,\, d\mathcal{H}^{n}_{\wh{g}}
\end{align*}
 of an asymptotically Euclidean manifold $(M^{n+1},g)$ (with respect to the Euclidean metric $\wh{g}=g_{eucl}$ for a given asymptotically Euclidean chart of order greater than $(n-1)/2$) of nonnegative scalar curvature is nonnegative and zero if and only if the manifold is isometric to Euclidean space $(\R^{n+1},g_{eucl})$.

The first proofs of this result were given by Schoen and Yau in dimensions
$n+1\leq 7$, using minimal hypersurfaces \cite{SY79} and by Witten on spin manifolds using harmonic spinors \cite{Wit81}. The general case remained open for
a couple of decades until Schoen-Yau \cite{SY22} and Lohkamp \cite{Loh16} independently
announced proofs of the positive mass theorem without any additional conditions.

In recent years, further approaches have been used to give different proofs of the positive mass theorem in dimension three, for example by using Ricci flow  \cite{Li18} or  linearly growing harmonic functions \cite{BKKS22}. While the first of these papers uses the well-developed theory of three-dimensional Ricci flows with surgery, the second paper uses the Gauss-Bonnet formula for three-dimensional manifolds in a crucial way.

Agostiniani, Mazzieri and the second author established yet another method to prove the positive mass theorem for three-dimensional manifolds. In \cite{AMO24}, they established a monotonicity formula along the level sets of an appropriate harmonic function related to the fundamental solution of the Laplace operator centered at an arbitrary point $o$. 
The quantity is increasing from $0$ (for level sets near $o$) to a limit at infinity which turned out to be bounded above by a positive multiple of $m_{ADM}$ and the positive mass theorem follows.

In this paper, we use a similiar approach to prove a new positive mass theorem for the volume-renormalized mass $m_{VR}$ on three-dimensional asymptotically hyperboloidal  manifolds. The quantity $m_{VR}$ was recently introduced by Dahl, McCormick and the first author in \cite{DaKrMc}.
Let $M^{n+1}$ be the interior of a manifold $\overline{M}$ with boundary $\partial M$ and $g$ a complete Riemannian metric on $M$.
We call $(M,g)$ asymptotically hyperboloidal of order $\delta>0$ if there is a compact subset $K\subset M$, a large radius $R$ and a diffeomorphism $\varphi:\overline{M}\setminus K\to \overline{\Hyp}^{n+1}\setminus \overline{B}_R$ such that
\begin{equation}\label{decaymetricatinfinity}
\big\|e^{\delta r}(\varphi_{*}g-g_{hyp})\big\|_{C^{2,\alpha}(\Hyp^{n+1}\setminus \overline{B}_R,\,g_{hyp})}<+\infty.
\end{equation}
Here,
 $r$ is the distance to the origin measured with respect to $g_{hyp}$. Moreover, the chart $(M\setminus K, \varphi)$ is called asymptotically hyperboloidal of order $\delta>0$, and the map $\varphi$ is said to be asymptotically hyperboloidal of order $\delta>0$.
\begin{remark}
In view of the Poincar\'{e} ball model of hyperbolic space, we think of $\overline{\Hyp}^{n+1}$ as the closure of the unit ball on $\R^{n+1}$.
\end{remark}
With $M_r=M\setminus \varphi^{-1}\big( \Hyp^{n+1}\setminus B_r\big)$ 
and $\wh{g}=g_{hyp}$, we now define the volume-renormalized mass as
\begin{align*}
m_{VR}(g)\,=\,\lim_{r\to+\infty}\Bigg[\, \int\limits_{\partial B_r}\!\big \langle\mathrm{div}_{\wh{g}} (\varphi_*g)-d\mathrm{tr}_{\wh{g}}(\varphi_*g), \nu_{\,\wh{g}}\big\rangle_{\wh{g}}\,\,d\mathcal{H}^{n}_{\wh{g}}\,+\,2n\Bigg(\,\int\limits_{M_r}\,d\mu_{g}-\int\limits_{B_r}\,d\mu_{\wh{g}}\,\Bigg)\Bigg]\,.
\end{align*}
Fundamental properties of this quantity have been investigated in \cite{DaKrMc}. It is finite and independent of $\varphi$ if $(M,g)$ is asymptotically hyperboloidal of order $\delta>\frac{n}{2}$ and if $\Ro+n(n+1)$ is integrable over $(M,g)$. On the other hand, it is always equal to $+\infty$ if we replace the assumption of integrability of $\Ro+n(n+1)$ with the condition that it is nonnegative and not integrable. Here, $\Ro$ denotes the scalar curvature of $(M,g)$.
Importantly, the quantity $m_{VR}$ can be regarded as a linear combination of the ADM boundery integral and the renormalized volume. Note that for $\frac{n}{2}<\delta<n$, the separate limits of these two terms do in general diverge but the linear combination given here converges. For $\delta>n$, the ADM boundary integral vanishes in the limit, and $m_{VR}$ reduces to a positive multiple of the renormalized volume. 

It is worth pointing out that in \cite{DaKrMc}, $m_{VR}$ was introduced for a much more general class of asymptotically hyperbolic manifolds, allowing arbitrary conformal boundaries and replacing $g_{hyp}$ with a large class of possible reference metrics $\wh{g}$. In order to keep the presentation in the introduction simple, we restrict to the case where the conformal boundary is the round sphere.

\begin{remark}\label{importantpropertyrenvol}
The definition in \cite{DaKrMc} assumes that $(M, g)$ is a conformally compact of at least $C^{2,\alpha}$-regularity and considers an exhaustion of $M$ given by a family of precompact open sets $M_r$ determined by a
boundary defining function. If the order satisfies the inequality $\delta\leq 2$, the conformal compactification will in general have lower regularity. However, going through the proof of \cite[Theorem 3.1]{DaKrMc}, one sees that the above setting and conditions  are good enough to make sure that $m_{VR}(g)$ is well-defined. Moreover, it is not hard to see that the limit does not change if one replaces $M_r$ by an arbitrary exhaustion of $M$ of precompact open subsets with smooth boundary.
\end{remark}

There have been other mass invariants previously defined for asymptotically hyperboloidal manifolds \cite{CN01,Wang01}. The
quantity $m_{VR}$ appears to be significant for the following two reasons. Using it as a normalization, we could in \cite{DaKrMc} for the first time establish a Einstein-Hilbert action for asymptotically hyperbolic manifolds in
a mathematically clean way. Moreover, it seems that  $m_{VR}$ can also be motivated from the Hamiltonian perspective, similarly to the ADM-mass, by considering expanding spacetimes asymptotic to the Milne model  $(\R_+ \times \R^n , -dt^2 + t^2 g_{hyp} )$, instead of asymptotically Minkowskian spacetimes. This is subject to further research.

The main result of this paper is the following positive mass theorem:
\begin{theorem}\label{thm:PMT}
Let $(M,g)$ be an orientable three-dimensional manifold which is asymptotically hyperboloidal of order $\delta>1$. Assume that its scalar curvature satisfies $\Ro\geq -6$ and that its second integral homology $H_2(M;\Z)$ does not contain any spherical classes. Then,
\begin{align*}
m_{VR}(g)\,\geq\, 0
\end{align*}
and equality holds if and only if $(M,g)$ is isometric to $(\Hyp^3,g_{hyp})$.
\end{theorem}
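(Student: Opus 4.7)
The plan is to adapt the Green-function monotonicity method of Agostiniani--Mazzieri--Oronzio \cite{AMO24} to the asymptotically hyperboloidal setting. Fix a base point $o \in M$ and construct a positive Green function $G$ of $-\Delta_g$ with pole at $o$, normalized to match the leading singularity at $o$ of the Green function on $\mathbb{H}^3$, which in geodesic coordinates centered at the pole equals $c(\coth r - 1)$---a function behaving like $c/r$ near $o$ and like $2c\, e^{-2r}$ at infinity. Existence and positivity of such a $G$ follow from an exhaustion argument together with standard elliptic theory, and the decay \eqref{decaymetricatinfinity} allows one to compare $G$ and its first and second derivatives with the model Green function outside a large compact set, yielding sharp two-sided asymptotics.

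Next, introduce a potential $u$ defined as a monotone function of $G$, chosen so that on the hyperbolic model $u$ coincides with the geodesic distance $r$ from the pole; then the level sets $\Sigma_t = \{u = t\}$ foliate $M$ (away from critical values), they are geodesic spheres in the model, and $|\nabla u|=1$ there. Define a functional
\begin{equation*}
F(t) \,=\, \int_{\Sigma_t}\! \Phi\big(|\nabla u|, \Ho, \Ro + 6 \big)\, d\sigma,
\end{equation*}
where $\Phi$ is a combination of $|\nabla u|$, the mean curvature $\Ho$ of $\Sigma_t$, and a scalar-curvature correction designed so that $F$ is constant on $\mathbb{H}^3$. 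Applying the Bochner identity to $u$ and the Gauss equation on $\Sigma_t$, one expects to derive
\begin{equation*}
F'(t) \,\geq\, \int_{\Sigma_t}\!(\Ro + 6)\,\omega\, d\sigma \,+\, \alpha(t)\bigl(2-\chi(\Sigma_t)\bigr) \,+\, \text{(non-negative bulk terms)},
\end{equation*}
for a positive weight $\omega$ and a factor $\alpha(t)>0$. The hypothesis $\Ro \geq -6$ controls the first term. For the Gauss--Bonnet term, the end of $M$ is diffeomorphic to $\mathbb{S}^2 \times (R, \infty)$, so regular level sets far out are $2$-spheres; the condition that $H_2(M;\Z)$ contains no spherical classes, together with Sard's theorem and a co-area argument, should force every regular connected component of $\Sigma_t$ to be a $2$-sphere bounding in $M$, hence $\chi(\Sigma_t) \leq 2$.

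It then remains to identify the limits. A blow-up analysis at $o$ gives $F(t) \to 0$ as $t \to 0$. At infinity, expanding $u$, $|\nabla u|$ and $\Ho$ in the asymptotic chart via \eqref{decaymetricatinfinity} identifies $\lim_{t \to \infty} F(t) = C \cdot m_{VR}(g)$ for a positive universal constant $C$; the delicate point is that $\Phi$ must be engineered so that its expansion on large coordinate spheres reproduces \emph{both} the ADM-type boundary integrand and the renormalized-volume correction that together make up $m_{VR}$. If $\Ro + 6$ fails to be integrable, then Remark~\ref{importantpropertyrenvol} gives $m_{VR}(g) = +\infty$ and the statement is trivial, so one may assume integrability in the above computation. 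Monotonicity and the limit identification together yield $m_{VR}(g) \geq 0$. For equality, vanishing of every non-negative contribution in $F'$ forces the level sets to be umbilical round $2$-spheres and $u$ to satisfy on each orbit the same ODE as on $\mathbb{H}^3$, from which a standard rigidity argument recovers $(M,g) \cong (\mathbb{H}^3, g_{hyp})$.

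The two main obstacles I foresee are, first, obtaining sharp enough asymptotic control of $G$ on a manifold whose decay rate $\delta > 1$ is borderline for the definition of $m_{VR}$---this will require weighted-Schauder estimates and a careful model-comparison argument at infinity---and second, the topological step of showing that every regular level set of $u$ decomposes into bounding $2$-spheres. The latter is subtle because critical values of $u$ can a priori alter the topology of $\Sigma_t$; controlling $\chi(\Sigma_t) \leq 2$ almost everywhere in $t$ will require a careful Morse-theoretic and co-area analysis exploiting the no-spherical-classes hypothesis, in the spirit of \cite{BKKS22}.
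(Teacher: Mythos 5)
Your outline captures the broad Green-function monotonicity strategy of the paper, but it has several concrete gaps that would prevent the argument from closing.

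First, the monotone quantity cannot be a pure surface integral $\int_{\Sigma_t}\Phi\,d\sigma$. The volume-renormalized mass contains, besides the ADM-type boundary term, a renormalized \emph{volume}; no choice of a local integrand $\Phi(|\nabla u|,\mathrm{H},\Ro+6)$ on the level sets can produce a bulk quantity of this type in the limit $t\to\infty$. The paper's $F(t)$ in \eqref{eq0} necessarily includes two integrals over the sub-level sets $\{u<2-\coth t\}$, which are exactly what compensate the renormalized volume part of $m_{VR}$. Without those bulk terms the limit at infinity simply does not identify (or bound) $m_{VR}$. Relatedly, even with the correct $F$, one only gets an inequality $\lim_{t\to+\infty}F(t)\le\frac12 m_{VR}(g)$ via l'H\^{o}pital and the nonnegativity of $F'$, not an equality $\lim F = C\,m_{VR}$ as you assert.

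Second, the asymptotic identification at infinity requires a precise expansion of $\mathcal{G}_o$ of the form \eqref{eq:polyhom_expansion_prelllbis}, i.e.\ $\mathcal{G}_o=\phi\,\mathcal{G}^b+O_2(e^{-3r})$, and this in turn relies on the metric being polyhomogeneous near infinity (Theorem~\ref{thm:asymptotic_phg}\ref{part:phg}). For a metric that is merely asymptotically hyperboloidal of order $\delta>1$ — potentially only $C^{0,\alpha}$- or $C^{1,\alpha}$-conformally compact — this expansion is unavailable, and no amount of weighted Schauder estimates alone will deliver it. The paper handles this with a genuinely separate density step: conformally transform to constant scalar curvature $-6$ (Theorem~\ref{Yamabe_lowreg}, a low-regularity Yamabe statement), then approximate by smooth polyhomogeneous metrics, apply the preliminary mass inequality (Theorem~\ref{thm:PMT_0version}) to each approximant, and pass to the limit using continuity of $m_{VR}$ along the Yamabe flow lines. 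Your proposal contains no counterpart to this step, and without it the positivity is only proved for a strictly smaller class of metrics than the theorem claims.

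Third, the Gauss--Bonnet/topology step is stated with the logic reversed. You want to conclude $\chi(\Sigma_t)\le 2$, but asserting that every regular connected component is a bounding $2$-sphere does not give this: if a regular level set had $k\ge 2$ spherical components, its Euler characteristic would be $2k>2$. What the paper actually uses (via \cite[Lemma 2.3]{MW24}, together with the maximum-principle structure of the sub- and super-level sets of the harmonic function $u$) is the opposite dichotomy: either $\Sigma_t$ is connected (so $\chi\le 2$ automatically), or $\Sigma_t$ is disconnected and \emph{none} of its components is a sphere (so $\chi\le 0$). This is where the no-spherical-classes hypothesis enters, and your version of the argument would not close.
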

This theorem is a significant generalization of \cite[Theorem D]{DaKrMc}, where the assertion has only been proven under the assumption that $M$ is diffeomorphic to $\Hyp^3$.

\begin{remark}
We observe that there are not non-separating spheres in the manifold $M$ if its second integral homology $H_2(M;\Z)$ does not contain any spherical classes. These topological conditions--concerning the second integral homology group or the absence of non-separating spheres--are not new: the first appeared in \cite{MW24}, while the second in a previous work \cite{Stern}. They are equivalent if the manifold is one-ended (i.e.,  if there is only one unbounded connected component in the complement of any compact subset).

It is a standard fact of three-dimensional differential topology that an orientable $3$-manifold $M$ contains a non-separating sphere if and only if it is diffeomorphic to a connected sum $N\#(\SSS^1\times \SSS^2) $ (see, for instance, \cite[Lemma 3.8 and Lemma 3.16]{Hembook}). Therefore, the decomposition of $M$ into prime manifolds does not contain an $\SSS^1\times \SSS^2$. On the other hand, it is easy to see that for any closed orientable three manifold $N$ with no $\SSS^1\times \SSS^2$, there is a metric $g$ on $M=N\setminus\left\{p\right\}$ such that $(M,g)$ satisfies the assumption of the theorem.
\end{remark}

A crucial ingredient of Theorem \ref{thm:PMT} is the following monotonicity formula along the level sets of a fundamental solution.

\begin{theorem}\label{thm_monotonicity}
Let $(M,g)$ be a complete, noncompact, orientable, three-dimensional Riemannian manifold with scalar curvature greater than or equal to $-6$ and such that its second integral homology $H_2(M;\Z)$ does not contain any spherical classes. We assume that there exists the minimal positive Green function $\mathcal{G}_o$ for $\Delta_g$ with a pole at some point $o \in M$, and that $\mathcal{G}_o$ vanishes at infinity. We consider the function 
\begin{equation}\label{definition_u}
u\,=\,1-4\pi \mathcal{G}_o\,, 
\end{equation}
and let $F:(0, + \infty)\to \R$ be the function defined as 
\begin{align}
F(t)\,\, &= \,\, 4\pi t \,\,+\,\,\, \sinh^3t \cosh t\!\!\!\int\limits_{\{u=2-\coth t\}}\!\!\!\!\! \vert \nabla u \vert^{2} \,\, d\mathcal{H}^2 \, \, - \,\,\sinh^2t \!\!\! \int\limits_{\{u=2-\coth t\}}\!\!\!\!\!\vert \nabla u \vert\, \mathrm{H}   \,\, d\mathcal{H}^2 \\
&\quad \, \,\,+\,\,3\!\!\!\int\limits_{\{u<2-\coth t\}}\!\!\!\frac{\vert \nabla u \vert}{(2-u)^2-1} \,\, d\mu\,\,-\!\!\int\limits_{\{u<2-\coth t\}}\!\!\!\frac{\vert \nabla u \vert^3}{[(2-u)^2-1]^3} \,\, d\mu\,,\label{eq0}
\end{align}
where 
$\mathrm{H}$ is the mean curvature of the level set $\{u=2-\coth t \}\setminus \{\vert \nabla u \vert=0\}$ computed with respect to the $\infty$--pointing unit normal vector field $\nu={\nabla u}/{\vert \nabla u \vert}$.
Then, we have that 
\begin{equation*}
0 < s \leq t < +\infty \quad \Rightarrow \quad  F(s) \, \leq  \, F(t) \, ,
\end{equation*}
provided $2- \coth s$ and $2- \coth t$ are regular values of $u$. 
\end{theorem}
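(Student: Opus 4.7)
The plan is to compute $F'(t)$ via the coarea formula for the bulk integrals and the first-variation formula for the moving boundary $\Sigma_t := \{u=c(t)\}$ with $c(t):=2-\coth t$, and then to combine the resulting terms using the Bochner identity for the harmonic $u$, the Gauss equation for $\Sigma_t\hookrightarrow M$, and Gauss--Bonnet, so as to exhibit $F'(t)$ as a sum of manifestly nonnegative quantities. The strategy parallels the Euclidean blueprint of~\cite{AMO24}, adapted to the hyperbolic parameter $c(t)=2-\coth t$, which enjoys the matching identities $c'(t)=1/\sinh^2 t$ and $(2-u)^2-1=1/\sinh^2 t$ on $\Sigma_t$.

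First, I would apply the coarea formula
\begin{equation*}
\frac{d}{dt}\!\int_{\{u<c(t)\}}\!\phi\, d\mu \,=\, \frac{1}{\sinh^2 t}\int_{\Sigma_t}\!\frac{\phi}{|\nabla u|}\, d\mathcal{H}^2
\end{equation*}
to the two bulk integrals in \eqref{eq0}. The cancellation produced by the identity above yields a clean contribution $3|\Sigma_t| - \sinh^4 t\int_{\Sigma_t}|\nabla u|^2\, d\mathcal{H}^2$ to $F'(t)$. For the two boundary integrals I would use the first-variation formula along the normal flow with speed $c'(t)/|\nabla u|$, combined with the harmonicity identity $\partial_\nu|\nabla u| = |\nabla u|\,\Ho$ obtained by decomposing $\Delta u = 0$ along $\Sigma_t$. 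Assembling all contributions should give
\begin{equation*}
F'(t) \,=\, 4\pi + 3|\Sigma_t| + 3\sinh^2 t\cosh^2 t\!\!\int_{\Sigma_t}\!|\nabla u|^2\, d\mathcal{H}^2 + \sinh t\cosh t\!\!\int_{\Sigma_t}\!|\nabla u|\,\Ho\, d\mathcal{H}^2 -\!\!\int_{\Sigma_t}\!(2\Ho^2+\partial_\nu\Ho)\,d\mathcal{H}^2\,.
\end{equation*}

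Second, I would eliminate $\partial_\nu \Ho$ using the Riccati identity $\partial_\nu \Ho = -|A|^2 - \Ric(\nu,\nu)$ and the Gauss equation $R_{\Sigma_t} = \Ro - 2\Ric(\nu,\nu) + \Ho^2 - |A|^2$, trading the normal derivative for intrinsic quantities on $\Sigma_t$ and the ambient scalar curvature $\Ro$. The bound $\Ro \geq -6$ precisely cancels the $+3|\Sigma_t|$ term, Gauss--Bonnet converts the intrinsic scalar curvature integral into $4\pi\chi(\Sigma_t)$, and the assumption that $H_2(M;\Z)$ contains no spherical classes ensures (as in \cite{MW24,Stern}) that regular level sets of $u$ do not carry superfluous spherical components inflating $\chi(\Sigma_t)$, so that $4\pi-2\pi\chi(\Sigma_t)\geq 0$. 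The remaining surface integrand involves $|\nabla u|$, $\Ho$ and $|A|^2$; after using $|A|^2 = |\mathring A|^2 + \Ho^2/2$ and the matched choice of the hyperbolic prefactors $\sinh^3 t\cosh t$ and $\sinh^2 t$ in the definition of $F$, a sum-of-squares structure should emerge.

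I expect the main obstacle to be this last algebraic step: after all cancellations, the residual pointwise integrand on $\Sigma_t$ must be shown nonnegative, which likely requires an auxiliary tangential integration by parts together with a refined form of the Bochner identity for the harmonic $u$. The hyperbolic prefactors in~\eqref{eq0} are engineered so that the cross-terms assemble into a genuine square, paralleling the Euclidean case of~\cite{AMO24}. Regularity issues at critical values of $u$ are handled by restricting to regular values, which are dense by Sard's theorem, and extending the monotonicity inequality by continuity.
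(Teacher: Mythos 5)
Your sketch follows the same computational skeleton as the paper's proof on the regular set $\mathcal{T}$: coarea for the bulk integrals, evolution equations for the two surface integrals, Bochner/Riccati and the traced Gauss equation to trade normal derivatives for intrinsic curvature, Gauss--Bonnet together with the no-spherical-classes hypothesis, and finally the recognition that the hyperbolic prefactors assemble the remaining terms into the square $\tfrac{3}{4}\big(\mathrm H-\tfrac{2(2-u)}{(2-u)^2-1}|\nabla u|\big)^2$ plus $\tfrac{|\nabla^{\Sigma_t}|\nabla u||^2}{|\nabla u|^2}+\tfrac{|\mathring{\mathrm h}|^2}{2}$. (Your displayed $F'(t)$ is not quite right---the coefficient of $\int_{\Sigma_t}|\nabla u|\mathrm H$ must be $-3\sinh t\cosh t$, not $+\sinh t\cosh t$, and you have lost the $\int_{\Sigma_t}\tfrac{|\nabla^{\Sigma_t}|\nabla u||^2}{|\nabla u|^2}$ contribution coming from $|\nabla u|\Delta_{\Sigma_t}(1/|\nabla u|)$---but these are errors of bookkeeping, not of strategy.)

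The genuine gap is the final sentence: ``Regularity issues at critical values of $u$ are handled by restricting to regular values, which are dense by Sard's theorem, and extending the monotonicity inequality by continuity.'' This does not work. Knowing $F'\geq 0$ a.e.\ (on the open full-measure set $\mathcal{T}$) is far from sufficient to conclude $F(s)\leq F(t)$: the Cantor staircase is continuous with vanishing a.e.\ derivative yet non-constant, and here $F$ is not even a priori continuous across critical values, so downward jumps cannot be excluded by Sard alone. What is actually needed---and what the paper devotes the second half of the proof to---is that $F$ admits a \emph{locally absolutely continuous} representative on $(0,+\infty)$ agreeing with $F$ on $\mathcal{T}$. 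Three of the five terms of $F$ are easy (the linear term, a locally Lipschitz term via \cite[Lemma 12]{ChLi}, and the two bulk terms via coarea); the delicate one is $\widetilde F(s)=-\int_{\{u=s\}}|\nabla u|\,\mathrm H\,d\mathcal{H}^2$, which must be shown to lie in $W^{1,1}_{\mathrm{loc}}(-\infty,1)$. The paper does this by working with $Y=\nabla|\nabla u|$, regularizing with cut-offs $\eta_k(|\nabla u|)$ supported away from $\mathrm{Crit}(u)$, splitting $\mathrm{div}(Y)$ via Bochner into a nonnegative part $P=|\nabla u|^{-1}(|\nabla du|^2-|\nabla|\nabla u||^2)$ and a Ricci part $D$, and invoking dominated and monotone convergence to pass to the limit and conclude $\mathrm{div}(Y)\in L^1_{\mathrm{loc}}(M_o)$. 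Without some argument of this kind (or an alternative regularization across critical values), your proof establishes a pointwise inequality $F'\geq 0$ on an open set but does not yield the claimed global monotonicity $F(s)\leq F(t)$.
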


\begin{remark}
The function $F(t)$ defined in \eqref{eq0} shares similarities with the monotone quantity in \cite{AMO24} for the asymptotically Euclidean setting. The first term is identical, the other two terms in the first line appear with different functions in front of the integrals in \cite{AMO24}, due to the fact that the level sets are defined by the analogous function on hyperbolic space. The terms in the second line are new and compensate the renormalized volume part in $m_{VR}$. 
\end{remark}

\begin{example} 
The class of Riemannian manifolds that satisfy the assumptions of Theorem \ref{thm_monotonicity} is clearly larger than that of Theorem \ref{thm:PMT}. They could for example be asymptotically hyperbolic with a conformal boundary different than the sphere. In this case, we do not know what the limit $\lim_{t\to\infty}F(t)$ will be.
\end{example}

In order to compute the limit of the monotone function, we need to exploit the asymptotic expansion of the function $u$, which is directly determined by that of Green function $\mathcal{G}_o$ (as $u=1-4\pi \mathcal{G}_o$). 
Therefore, under the additional assumption that there exists a distinguished asymptotically hyperboloidal map of order $\delta>1$, in which the function $\mathcal{G}_o$ admits expansion
\begin{equation}\label{eq:polyhom_expansion_prelllbis}
\varphi_*\mathcal{G}_o(r,\xi)\,=\,\phi(\xi) \,e^{-2r}+O_2(e^{-3r})\,,
\end{equation}
where $\phi$ is a smooth and positive function on $\SSS^2$, we are able to show that
\begin{equation}\label{thm_limit}
\lim_{t\to +\infty} F(t)\,\leq\, \frac{1}{2}\, m_{VR}(g)\,.
\end{equation}
Thus, by combining Theorem \ref{thm_monotonicity} with the limits of the function $F(t)$ as $t\to 0^+$ and $t\to +\infty$, we establish the non-negativity of $m_{VR}(g)$. 
A density argument will then be used to get rid of this additional assumption and prove the theorem as stated.
\medskip

The paper is organized as follows: In Section \ref{SectPreliminaries}, we recall and discuss some preliminary material.
In Section \ref{SectEst}, we present the asymptotic behavior at infinity of the minimal positive Green function with pole on an asymptotically hyperbolic manifold of dimension $n+1\geq 3$, whose metric is at least $C^{2,\alpha}$-conformally compact and polyhomogeneous. 
In Section \ref{effmon}, we prove the monotonicity result, Theorem \ref{thm_monotonicity}. Finally, in Section \ref{sectpositive mass theorem}, we establish our positive mass theorem, Theorem \ref{thm:PMT}, by combining a density argument with the results of Sections \ref{SectEst}. and \ref{effmon}.

\subsection*{Acknowledgements} We appreciate financial support from the G\"{o}ran Gustafsson Foundation. 
The first author would like to thank Hartmut Wei\ss{} for helpful discussions related to this paper.
The second author is a member of the INDAM-GNAMPA. The third author is supported by the FWO and the FNRS via EOS project 40007524.
He would like to express its gratitude to Marco Usula for illuminating discussions.

%%%%%%%%%%%%%%%%%%%%%%%%%%%%%%%%%%%%%%%%%%%%%
%%%%%%%%%%%%%%%%%%%%%%%%%%%%%%%%%%%%%%%%%%%%%

\section{Preliminaries}\label{SectPreliminaries}

This section is dedicated to collect notations and conventions and to introduce the main objects and properties that will be used throughout the article.

\subsection{Notations and Conventions}
If $(M,g)$ is a Riemannian manifold, its Levi-Civita connection is denoted by $\nabla$, and the associated Laplace-Beltrami is $\Delta = \mathrm{tr}_g(\nabla \circ d)$ (notice that with our convention, $\Delta$ has nonpositive spectrum). Moreover,

\begin{itemize}
\item $\mathcal{H}^k$ is the $k$-dimensional Hausdorff measure induced by the Riemannian distance; 
\item $\mu$ is the canonical measure. 
\end{itemize}

In Sections \ref{effmon} and \ref{sectpositive mass theorem}, we will work only with three-dimensional Riemannian manifolds. In particular, we set:
\begin{itemize}
\item $u=1-4\pi \mathcal{G}_o;$
\item $M_o=M\setminus \{o\};$
\item $\mathcal{T}=\big\{t\in (0,+\infty)\,:\,  \text{$2- \coth t$ is a regular value of $u$}\big\}\,;$
\item $\Sigma_t=\{u=2-\coth t\}$;
\item $\Omega_t=\{u<2-\coth t\}$;
\item $E_{s}^{S}=\left\{s<u<S\right\}$;
\item $\mathrm{Crit}(u)=\{\vert \nabla u \vert=0\}$;
\item $g_{hyp}$ is the canonical metric of the hyperbolic space $\Hyp^3$, and we will explicitly write the subscript $g_{hyp}$ when a quantity is referred to $g_{hyp}$;
\item $b$ is the metric $\varphi^{*}g_{hyp}$, where $\varphi$ is an asymptotically hyperboloidal map of order $\delta>1$, and we will explicitly write the subscript $b$ for any quantity referred to the metric $\varphi^{*}g_{hyp}$;
\item $\mathcal{G}^b=(4\pi)^{-1}\big(\coth r-1\big);$
\item $S_t=\{\varphi_*u=2-\coth t\}$, where $\varphi$ is an asymptotically hyperboloidal map of order $\delta>1$;
\item $D_t$ is the compact domain in $\HH^3$ having $S_t$ as a boundary
\end{itemize}

\subsection{Conformally compact and asymptotically hyperbolic manifolds}

Let $\overline{M}$ be a compact manifold with interior $M$ and boundary $\partial M$.
A \emph{boundary defining function} is a smooth function $\rho \colon \overline{M} \to [0,\infty)$ such that $\rho^{-1}(0) = \partial M$ and $d\rho \neq 0$ along $\partial M$.
Two such functions differ by a multiplicative function that does not vanish along $\partial M$.
A Riemannian metric $g$ on the interior $M$ is called \emph{conformally compact} of class $C^{k,\alpha}$ if $\bar{g} = \rho^2 g$ extends as a $C^{k,\alpha}$ Riemannian metric on $\overline{M}$.
The compact conformal manifold $(\partial M, [\bar{g}|_{\partial M}])$ is called the \emph{conformal infinity} of $(M,g)$.

If $k\geq 2$, $(M,g)$ is a complete noncompact Riemannian manifold whose sectional curvature satisfies
\begin{equation}
    \label{eq:sec_conformal_compactification}
    \sec = - |d\rho|^2_{\bar{g}} + O(\rho).
\end{equation}
Notice that $|d\rho|^2_{\bar{g}}$ does not depend on the choice of $\rho$.
An \emph{asymptotically hyperbolic manifold} is then defined as a conformally compact manifold satisfying $|d\rho|_{\bar{g}} = 1$ along $\partial M$.

Given a representative $h_0 \in [\bar{g}|_{\partial M}])$, it follows from \cite{graham_einstein_1991} that there exists a special geodesic boundary defining function $\rho$, called \emph{geodesic}, such that $|d\rho|_{\bar{g}} =1$ in a neighborhood of $\partial M$, and such that $g$ reads
\begin{equation}
    \label{eq:h_rho}
    g = \frac{d\rho\otimes d\rho + h_{\rho}}{\rho^2},
\end{equation}
in a collar neighborhood of the boundary, identified with $[0,\varepsilon)\times \partial M$ \emph{via} the flow of $\partial_{\rho}$.
Here, $(h_{\rho})_{\rho \in (0,\varepsilon)}$ is a smooth family of Riemannian metrics on $\partial M$ that converges to $h_0$ as $\rho \to 0$ in $C^{k,\alpha}$-topology.

\subsection{Weighted Hölder spaces}

We now introduce some functional spaces that play an important role in the study of conformally compact manifolds.
We refer to \cite{Lee06} for a complete introduction.
Let $(M,g)$ be a conformally compact manifold with boundary defining function $\rho$.
If $C^{\ell,\beta}(M)$ denotes the usual Hölder space on $(M,g)$ of regularity $(\ell, \beta)$, with $\ell$ a nonnegative integer and $\beta \in [0,1]$, then for $\delta \in \mathbb{R}$, we define the weighted Hölder space
\begin{equation}
    C^{\ell,\beta}_{\delta}(M) = \rho^{\delta}C^{\ell,\beta}(M) = \{f \,:\, \exists u \in C^{\ell,\beta}(M) \text{ such that } f = \rho^{\delta}u \} = \{f \,:\, \rho^{-\delta}f \in C^{\ell,\beta}(M)\},
\end{equation}
which is a Banach space when equipped with the weighted norm
\begin{equation}
    \|f\|_{C^{\ell,\beta}_{\delta}(M)} = \|\rho^{-\delta}f\|_{C^{\ell,\beta}(M)}.
\end{equation}
Notice that if $f \in C^{k,\alpha}_{\delta}(M)$, then $|\nabla^j f| = O(\rho^{\delta})$ for all $j \in \{0,\ldots,k\}$.
These spaces do not depend on the choice of the boundary defining function $\rho$ in the sense that they consist of exactly the same functions, and that the associated norms are equivalent.

\subsection{Polyhomogeneity}\label{subsectPolyhomogeneity}

A special type of regularity on conformally compact manifolds is \emph{polyhomogeneity}, which belongs to the more general context of $0$-calculus and $b$-calculus developed by Melrose and Mazzeo \cite{mazzeo_elliptic_1991,mazzeo_elliptic_2014,melrose_transformation_1981}.
See also the recent paper \cite{usula_boundary_2024}, and \cite[Appendix A]{AILA18} for the special case of asymptotically hyperbolic manifolds, which is of interest to us.

Let $\overline{M}$ be a smooth manifold with boundary $\partial M$, interior $M$, and boundary defining function $\rho$.
We denote $\mathcal{A}(M)$ the set of smooth functions $f$ on $M$ such that that $X_1\cdots X_k f$ is bounded for any vector fields $X_1,\ldots,X_k$ on $\overline{M}$ with $X_j|_{\partial M}$ tangent to $\partial M$.
A smooth function $f\colon M \to \mathbb{C}$ is \emph{polyhomogeneous} if
\begin{itemize}
    \item there exists a sequence of complex numbers $\{s_i\}_{i\geq 0}$, with $\mathrm{Re}(s_i)\nearrow \infty$,
    \item there exists a sequence of nonnegative integers $\{N_i\}_{i\geq 0}$,
    \item there exists a sequence of smooth functions $\{\bar{f}_{i,j}\}_{i\geq 0, 0\leq j\leq N_i}$ on $\partial M$,
\end{itemize}
such that for any $k \geq 0$, there exists $N \geq 0$, with
\begin{equation}
    f - \sum_{i=0}^N \sum_{j=0}^{N_i} \bar{f}_{i,j} \rho^{s_i} (\log \rho)^j \in \rho^k \mathcal{A}(M).
\end{equation}
In that case, we write
\begin{equation}
    f \sim \sum_{i\geq 0} \sum_{j=0}^{N_i} \bar{f}_{i,j} \rho^{s_i} (\log \rho)^j.
\end{equation}
An important property of polyhomogeneous functions is that for any $\delta$, there exists $\varepsilon >0$ such that one may write
\begin{equation}
    f = \sum_{\mathrm{Re}(s_i) \leq \delta} \sum_{j=0}^{N_i} \bar{f}_{i,j} \rho^{s_i}(\log \rho)^j + O_{\infty}(\rho^{\delta + \varepsilon}).
\end{equation}

Finally, a conformally compact metric $g$ on $M$ is polyhomogeneous if for any coframe $\{\theta^1,\ldots,\theta^n\}$ on $\partial M$, the components of $g$ in the coframe $\{\rho^{-1}d\rho, \rho^{-1}\theta^1,\ldots,\rho^{-1}\theta^n\}$ are polyhomogeneous.
Notice that if $(M,g)$ is conformally compact of class $C^{\infty}$, then $g$ is polyhomogenous: this can be shown by writing $g$ in the form \eqref{eq:h_rho}, and analyzing the Taylor expansion of $h_{\rho}$ near $\rho = 0$.
In particular, conformally compact polyhomogenous metrics form a dense subset of $C^{k,\alpha}$ conformally compact metrics.

\section[Estimates for the Green function of an AH manifold]{Asymptotic estimates for the Green's function of an asymptotically hyperbolic manifold}\label{SectEst}

Let $(M^{n+1},g)$ be an asymptotically hyperbolic manifold of class $C^{k,\alpha}$, with $n+1 \geq 3$, $k\geq 2$, $\alpha \in (0,1]$.
In this section, we give a proof of the existence and uniqueness of the minimal positive Green function with prescribed pole $o \in M$, and investigate its asymptotic properties, both near the pole and near infinity.
This proves in particular that such manifolds are non-parabolic.
More precisely, the main result of this section is the following.

\begin{theorem}
    \label{thm:asymptotic_phg}
    Let $(M^{n+1},g)$ be an asymptotically hyperbolic manifold of class $C^{k,\alpha}$, with $n+1 \geq 3$, $k\geq 2$, $\alpha \in (0,1]$, and with boundary defining function $\rho$.
    Fix $o \in M$, and consider the differential system
    \begin{equation}
        \label{eq:Green_system}
        \begin{cases}
            \Delta \mathcal{G}_o  = -\delta_o & \text{in } M,       \\
            \mathcal{G}_o > 0                 & \text{in } M,       \\
            \mathcal{G}_o \to 0               & \text{at infinity}.
        \end{cases}
    \end{equation}
    Then there exists a unique solution $\mathcal{G}_o$ to \eqref{eq:Green_system} in the sense of distributions.
    It is a smooth function $ \mathcal{G}_o \colon M\setminus \{o\} \to \mathbb{R}$, and has the following asymptotic properties.
    \begin{enumerate}[label=\arabic*)]
        \item \emph{Asymptotics near the pole.}
              \label{part:near_pole}
              If $\omega_n$ denotes the volume of the $n$-dimensional unit round sphere, and $r = d_g(\cdot,o)$ the geodesic distance to the pole, then
              \begin{align}
                  \label{eq:asymptotic_pole_order_0}
                  \mathcal{G}_o         & \underset{r \to 0}{\sim} \frac{1}{(n-1)\omega_n r^{n-1}},                                             \\
                  \label{eq:asymptotic_pole_order_1}
                  \nabla\mathcal{G}_o   & \underset{r \to 0}{\sim} - \frac{1}{\omega_n r^n} \nabla r,                                           \\
                  \label{eq:asymptotic_pole_order_2}
                  \nabla d\mathcal{G}_o & \underset{r \to 0}{\sim}  \frac{n}{\omega_n r^{n+1}} dr\otimes dr - \frac{1}{\omega_n r^n} \nabla dr.
              \end{align}
        \item \emph{Asymptotics near infinity.}
              \label{part:near_infinity}
              There exists $C >0$ such that, outside some compact region countaining $o$, it holds that
              \begin{align}
                  \label{eq:asymptotic_infinity_order_0}
                  C^{-1} \rho^n \leq \mathcal{G}_o & \leq C \rho^n, \\
                  \label{eq:asymptotic_infinity_order_1}
                  |\nabla \mathcal{G}_o |          & \leq C \rho^n, \\
                  \label{eq:asymptotic_infinity_order_2}
                  |\nabla^2 \mathcal{G}_o |        & \leq C \rho^n.
              \end{align}
        \item \emph{Better asymptotics near infinity in the polyhomogeneous case.}
              \label{part:phg}
              If $g$ is furthermore assumed polyhomogeneous, then there exist two smooth functions $\bar{v}_n, \bar{v}_{n+1} \colon \partial M \to \mathbb{R}$, with $\bar{v}_n >0$, and a constant $\varepsilon >0$, such that
              \begin{equation}
                  \label{eq:asymptotic_phg}
                  \mathcal{G}_o \underset{\rho \to 0}{=} \bar{v}_n \, \rho^n + \bar{v}_{n+1} \, \rho^{n+1} + O_{\infty}(\rho^{n+1+\varepsilon}).
              \end{equation}
    \end{enumerate}
\end{theorem}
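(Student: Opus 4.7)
The plan is to address the four parts of the theorem in the order listed, with the polyhomogeneous expansion as the main technical focus.

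\textbf{Existence, uniqueness, pole asymptotics.} For existence, I would use the classical exhaustion construction: pick a nested sequence of relatively compact domains $\Omega_k \Subset M$ with smooth boundary, $o \in \Omega_1$, exhausting $M$; solve Dirichlet problems $\Delta G_k = -\delta_o$ with zero boundary data; and pass to the monotone limit. To ensure finiteness of this limit and to obtain uniqueness, I would exploit the indicial identity in the geodesic gauge \eqref{eq:h_rho},
\begin{equation*}
\Delta_g \rho^s \,=\, s(s-n)\rho^s + O(\rho^{s+1}),
\end{equation*}
which shows that $\rho^{n-\varepsilon}$ is a positive supersolution near infinity for each small $\varepsilon>0$. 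This furnishes both the barrier needed to bound $\mathcal{G}_o$ outside a compact set and the maximum principle at infinity used to kill any bounded harmonic difference. The pole asymptotics in part~\ref{part:near_pole} are local: in $g$-normal coordinates at $o$, $g$ differs from the Euclidean metric by $O(r^2)$, so $\mathcal{G}_o - [(n-1)\omega_n r^{n-1}]^{-1}$ satisfies an elliptic equation with a mildly singular right-hand side, and standard elliptic regularity delivers \eqref{eq:asymptotic_pole_order_0}--\eqref{eq:asymptotic_pole_order_2} after differentiation.

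\textbf{Rough asymptotics at infinity.} For part~\ref{part:near_infinity}, I would build an approximate harmonic function from powers of $\rho$. Given a smooth positive function $\bar{v}_n$ on $\partial M$ (extended to be independent of $\rho$ in a collar), the indicial expansion of $\Delta_g$ uniquely determines a smooth $\bar{v}_{n+1}$ such that $\Delta_g(\bar{v}_n\rho^n + \bar{v}_{n+1}\rho^{n+1}) = O(\rho^{n+2})$. Comparing $\mathcal{G}_o$ with $\lambda(\bar{v}_n\rho^n + \bar{v}_{n+1}\rho^{n+1}) + \varepsilon\, \rho^{n-\delta}$ for suitable $\lambda>0$ and small $\delta>0$ on the annular region between a fixed level set of $\rho$ and the conformal boundary, then letting $\varepsilon \to 0$, yields $C^{-1}\rho^n \leq \mathcal{G}_o \leq C\rho^n$. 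The derivative bounds \eqref{eq:asymptotic_infinity_order_1}--\eqref{eq:asymptotic_infinity_order_2} then follow from rescaled interior Schauder estimates applied to $\rho^{-n}\mathcal{G}_o$ on $g$-balls of unit radius, which have bounded geometry in the asymptotic region.

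\textbf{Polyhomogeneous expansion.} The sharp statement \eqref{eq:asymptotic_phg} is the main point. Assuming that $g$ is polyhomogeneous, $\Delta_g$ is an elliptic $0$-differential operator with indicial roots $0$ and $n$ at $\partial M$, as witnessed by the identity above. Invoking the $0$-calculus of Mazzeo--Melrose \cite{mazzeo_elliptic_1991,AILA18,usula_boundary_2024}, any distributional solution of $\Delta u = 0$ outside a compact set that is $o(1)$ at infinity is polyhomogeneous with leading order $\rho^n$. Equivalently and more concretely, one constructs the expansion iteratively: from $\mathcal{G}_o = \bar{v}_n\rho^n + E_1$ one obtains $\Delta_g E_1 = -\Delta_g(\bar{v}_n\rho^n) = O(\rho^{n+1})$, which can be solved at order $\rho^{n+1}$ because the indicial polynomial evaluated at $s=n+1$ is $(n+1)(n+1-n) = n+1 \neq 0$, uniquely determining $\bar{v}_{n+1}$, and the iteration continues. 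No logarithmic terms appear in the range of orders relevant to \eqref{eq:asymptotic_phg} since the two indicial roots $0$ and $n$ only collide modulo integer shifts at $\rho^{2n}$, far past $\rho^{n+1}$. Positivity $\bar{v}_n > 0$ is forced by the lower bound $\mathcal{G}_o \geq C^{-1}\rho^n$ from the previous step.

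\textbf{Main obstacle.} The principal difficulty lies in part~\ref{part:phg}: closing the formal polyhomogeneous ansatz into a genuine asymptotic expansion with a \emph{conormal} remainder, i.e., an $O_\infty$ error rather than merely an $O$ one. This requires showing that $\mathcal{G}_o$ is polyhomogeneous to all orders and that the remainder decays in all derivatives, and it is precisely this global regularity statement that the $0$-calculus of Mazzeo--Melrose is designed to deliver. Managing the bookkeeping of indicial roots and translating the polyhomogeneous expansion of the metric $g$ into one for $\mathcal{G}_o$ constitute the technical heart of the proof.
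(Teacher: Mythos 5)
Your overall architecture matches the paper's: an approximate solution corrected via the weighted isomorphism or a maximum-principle barrier for existence, comparison functions near infinity for the coarse $\rho^n$ bounds, Lee-style weighted Schauder (your rescaled interior Schauder is the same mechanism) for the derivative decay, and abstract polyhomogeneity from the $0$-calculus followed by indicial bookkeeping to extract the two leading coefficients. The main substantive difference is in how you justify the absence of logarithms and exotic exponents in the range $[n,n+1]$, and there is a real gap there.

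Your heuristic — ``the two indicial roots $0$ and $n$ only collide modulo integer shifts at $\rho^{2n}$'' — only addresses one potential source of logs, namely the Fefferman--Graham log in the metric hitting the leading term $\rho^n$. However, the abstract polyhomogeneity result \cite[Theorem A.14]{AILA18} produces an expansion indexed by a sequence of \emph{complex} exponents $s_i$ with $\operatorname{Re}(s_i)\nearrow\infty$ and log powers $j=0,\dots,N_i$; it does not a priori give an integer-indexed, log-free expansion. To reach the stated form of \eqref{eq:asymptotic_phg}, one must separately rule out: (a) coefficients $\bar v_{i,0}\rho^{s_i}$ with $\operatorname{Re}(s_i)=n$ but $s_i\neq n$; (b) any $\bar v_{i,j}\rho^{s_i}(\log\rho)^j$ with $n<\operatorname{Re}(s_i)<n+1$; and (c) log terms at $s_i=n+1$. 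The paper's Proposition~\ref{prop:phg} does this by substituting the expansion into $\Delta v=0$ near infinity (using \eqref{eq:Delta_rho}) and observing that for all such $s_i$ the indicial polynomial $s_i(s_i-n)$ is nonzero, which forces the top log coefficient $\bar v_{i,N_i}$ to vanish, then the next by descending induction; the surviving identity then yields $n\sigma_0\bar v_n+(n+1)\bar v_{n+1}=0$, which is your statement that $\bar v_{n+1}$ is uniquely determined. Your collision heuristic explains why the FG log does not contaminate order $n+1$, but it does not explain why no complex, non-integer, or logarithmic terms appear at or below that order in the expansion you obtained abstractly. You should carry out the indicial-equation elimination order by order rather than appealing to the collision at $\rho^{2n}$. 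Once that is done, the remaining claims (reality of $\bar v_n,\bar v_{n+1}$ from $v$ being real, positivity of $\bar v_n$ from the coarse lower bound, and the upgrade from $O$ to $O_\infty$ from polyhomogeneity of the remainder) follow as you indicate.
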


The proof is devided in several parts for a better exposition.
Existence and uniqueness of the Green function together with part \ref{part:near_pole} are addressed in Proposition \ref{prop:Green_existence}.
Part \ref{part:near_infinity} is proved in Proposition \ref{prop:coarse_estimates}.
The last part \ref{part:phg} is the subject of Proposition \ref{prop:phg}.

In \eqref{eq:asymptotic_phg}, a neighborhood of the boundary at infinity is identified with a direct product $(0,\varepsilon] \times \partial M$.
We choose $\rho$ to be geodesic, and we henceforth make this assumption throughout all this section.
The metric $g$ then takes the form \eqref{eq:h_rho}, and if $\Delta$, $\bar{\Delta}$, and $\Delta_{h_{\rho}}$, denote the Laplace-Beltrami operators associated with $g$, $\bar{g}$, and $h_{\rho}$, then
\begin{equation}
    \bar{\Delta}
    = \mathrm{tr}_{\bar{g}}({\nabla}^{\bar{g}}\circ d)
    = {\partial_{\rho}}^2 + \frac{1}{2}\mathrm{tr}_{\bar{g}}(\partial_{\rho}h_{\rho}) \partial_{\rho} + \Delta_{h_{\rho}},
\end{equation}
and the conformal rule for the Laplace-Beltrami operator yields
\begin{equation}
    \Delta
    = \rho^2 \bar{\Delta} - (n-1)\nabla \log \rho
    = \rho^2{\partial_{\rho}}^2 - (n-1)\rho\partial_{\rho} + \frac{\rho}{2}\mathrm{tr}_{\bar{g}}(\partial_{\rho}h_{\rho}) \rho\partial_{\rho}+ \rho^2\Delta_{h_{\rho}},
\end{equation}
where we have used that $\nabla \log \rho = \frac{1}{\rho}\nabla\rho = \rho\partial_{\rho}$.
This finally reads
\begin{equation}
    \label{eq:Delta_rho}
    \Delta = (\rho\partial_{\rho})^2 - n \rho\partial_{\rho} + \rho L + \rho^2\Delta_{h_{\rho}}
\end{equation}
with $L$ satisfying
\begin{equation}
    \label{eq:L}
    L
    = \frac{1}{2}\mathrm{tr}_{\bar{g}}(\partial_{\rho}h_{\rho}) \rho\partial_{\rho}
    = (\sigma_0 + O(\rho)) \rho\partial_{\rho},
\end{equation}
for $\sigma_0 = \frac{1}{2}\mathrm{tr}_{\bar{g}}(\partial_{\rho}h_{\rho})|_{\rho=0}$, which is of class $C^{k-1,\alpha}$.

\subsection{Existence and asymptotics near the pole}

We use a perturbative method: we define an approximate solution $G_o$, which we then perturb into a genuine solution $\mathcal{G}_o$ thanks to a result of Lee \cite{Lee06}.
Let $\varepsilon > 0$ be small enough so that $B_o(\varepsilon)$ is a normal neighborhood of $o$ that does not intersect $\rho^{-1}((0,\varepsilon))$.
Let $G_o \colon M\setminus \{o\} \to \mathbb{R}$ be any smooth function satisfying
\begin{equation}
    G_o  = \dfrac{1}{(n-1)\omega_n r^{n-1}}
    \quad \text{in} \quad  B_o(\varepsilon)\setminus\{o\},
    \quad \text{and} \quad
    G_o = \rho^n
    \quad \text{in} \quad  \rho^{-1}((0,\varepsilon)),
\end{equation}
where $r = d_g(\,\cdot\,,o)$ is the geodesic distance to $o$ and $\omega_n$ is the volume of the $n$-dimensional unit sphere.
Classical computations, relying on the fact that $g$ is close to a flat metric in $B_o(\varepsilon)$, show that $-\Delta G_o = \delta_o + w$, with $w$ a smooth function.
Moreover, \eqref{eq:Delta_rho} implies that $w = \rho L(\rho^n) = n/2 \, \partial_{\rho}\mathrm{tr}_{\bar{g}}(\partial_{\rho}h_{\rho})\rho^{n+1}  =  O(\rho^{n+1})$ near infinity.
In particular, $w \in C^{0,\alpha}_{\delta}(M)$ for any $0 < \delta < n$.
In order to perturb $G_o$ into a genuine solution, we will need the following result due to Lee.

\begin{lemma}
    \label{lemma:Lee_isomorphism}
    For $0 < \delta < n$, $\Delta \colon C^{2,\alpha}_{\delta}(M) \to C^{0,\alpha}_{\delta}(M)$ is an isomorphism.
\end{lemma}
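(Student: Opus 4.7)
The plan is to reduce the claim to the Fredholm theory for geometric elliptic operators on conformally compact manifolds, developed by Mazzeo and Melrose and specialized to the asymptotically hyperbolic setting by Lee in \cite{Lee06}. First, I would read off the indicial behavior of $\Delta$ at the conformal boundary directly from \eqref{eq:Delta_rho}: the leading $0$-differential part of $\Delta$ is $(\rho\partial_\rho)^2 - n\rho\partial_\rho$, which sends the test monomial $\rho^s$ to $s(s-n)\rho^s$. Hence the indicial roots of $\Delta$ at $\partial M$ are $s = 0$ and $s = n$, and the weight range $0 < \delta < n$ lies strictly inside the indicial gap.

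For any such weight, the $0$-calculus produces a parametrix for $\Delta$ modulo compact remainders on the weighted Hölder spaces, which yields that $\Delta \colon C^{2,\alpha}_{\delta}(M) \to C^{0,\alpha}_{\delta}(M)$ is a bounded Fredholm operator. To upgrade this to index zero, I would exploit that $\Delta$ is formally self-adjoint with respect to the $L^2(M, d\mu_g)$ pairing, together with the duality between the weighted spaces for $\delta$ and $n - \delta$. Since both $\delta$ and $n-\delta$ lie in the open indicial gap $(0,n)$, a standard duality argument identifies the cokernel of $\Delta$ on $C^{2,\alpha}_{\delta}(M)$ with the kernel of the dual operator acting on the distributional analogue of $C^{2,\alpha}_{n-\delta}(M)$, forcing the index to vanish.

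Injectivity is the only piece that does not come for free from the general calculus. Given $u \in C^{2,\alpha}_{\delta}(M)$ with $\Delta u = 0$, the weight $\delta > 0$ guarantees that $u = O(\rho^{\delta})$, so $u$ is a bounded harmonic function on the complete Riemannian manifold $(M,g)$ that decays to zero at infinity. Applying the maximum principle to $u$ and to $-u$ then forces $u \equiv 0$. Together with the vanishing index, injectivity immediately yields surjectivity, and hence the isomorphism property.

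The main obstacle lies in the Fredholm statement itself, whose proof requires the full machinery of the $0$-calculus in order to invert the normal operator of $\Delta$ at the boundary face and control the remainders in weighted Hölder norms; the restriction $0 < \delta < n$ is precisely what ensures invertibility of the normal operator on the appropriate indicial polynomials. For the lemma at hand, this heavy step is provided in a black-box fashion by \cite{Lee06}, so the proof effectively reduces to invoking the Fredholm/isomorphism theorem therein and combining it with the elementary maximum-principle argument sketched above.
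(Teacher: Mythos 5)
Your proposal follows essentially the same route as the paper: both invoke the Fredholm theory of \cite{Lee06} (the paper cites Theorem~C.(c) directly, after verifying that the indicial radius equals $n/2$ via Lemma~7.2) and then dispose of the kernel by noting that a harmonic function in a positively weighted Hölder space decays at infinity and hence must vanish by the maximum principle. Your additional sketch of why the operator is Fredholm of index zero (indicial roots $0$ and $n$, self-adjointness, duality between weights $\delta$ and $n-\delta$) is just an unpacking of the contents of Lee's theorem rather than a genuinely different argument, so the two proofs coincide in substance.
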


\begin{proof}
    This is a direct application of \cite[Theorem C.(c)]{Lee06}.
    Indeed, the indicial radius of $\Delta$ is $n/2$ \cite[Lemma 7.2]{Lee06}, and one only needs to check that $\Delta$ has trivial $L^2$-kernel in $C^{k,\alpha}_{\delta}(M)$.
    But any harmonic function that lives in $L^2(M) \cap C^{k,\alpha}_{\delta}(M)$ vanishes at infinity and therefore must achieve an extremum in $M$.
    Hence, it identically vanishes by the maximum principle.
    This concludes the proof.
\end{proof}

We are now able to prove part \ref{part:near_pole} of Theorem \ref{thm:asymptotic_phg}.

\begin{proposition}
    \label{prop:Green_existence}
    The differential system \eqref{eq:Green_system} admits a unique solution, which satisfies \eqref{eq:asymptotic_pole_order_0}, \eqref{eq:asymptotic_pole_order_1} and \eqref{eq:asymptotic_pole_order_2}.
\end{proposition}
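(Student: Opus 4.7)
The plan is to convert the approximate solution $G_o$ into an exact Green function by adding a correction term supplied by Lee's isomorphism (Lemma \ref{lemma:Lee_isomorphism}), and then verify positivity, uniqueness, and the asymptotics near the pole by standard arguments. Since the error $w = -\Delta G_o - \delta_o$ is smooth on $M$ and satisfies $w = O(\rho^{n+1})$ at infinity, as recorded just above the proposition, it lies in $C^{0,\alpha}_\delta(M)$ for any $0 < \delta < n$. Lemma \ref{lemma:Lee_isomorphism} then produces a unique $v \in C^{2,\alpha}_\delta(M)$ with $\Delta v = w$, and I set $\mathcal{G}_o := G_o + v$. By construction $-\Delta \mathcal{G}_o = \delta_o$ in the sense of distributions, and elliptic regularity makes $\mathcal{G}_o$ smooth on $M \setminus \{o\}$.

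For positivity, I would use that $\mathcal{G}_o$ blows up at $o$ (since $G_o$ does and $v$ is continuous there) and tends to $0$ at infinity (since $G_o = \rho^n$ near infinity and $v = O(\rho^\delta)$). On the harmonic region $M \setminus \overline{B_o(\varepsilon)}$, with $\varepsilon$ small enough that $\mathcal{G}_o > 0$ on $\partial B_o(\varepsilon)$, the strong maximum principle together with these boundary values forces $\mathcal{G}_o > 0$ everywhere. Uniqueness follows from the same principle: the difference of two solutions is a distributional, hence smooth, harmonic function on all of $M$ that is bounded and tends to $0$ at infinity, and therefore vanishes identically.

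The asymptotics \eqref{eq:asymptotic_pole_order_0}--\eqref{eq:asymptotic_pole_order_2} will then be essentially automatic from the construction. Since $v$ lies in $C^{2,\alpha}$ on a neighborhood of $o$ (being in $C^{2,\alpha}_\delta$, and $\rho$ being bounded below in the interior), the tensors $v$, $\nabla v$, and $\nabla^2 v$ are all bounded there, while the prescribed explicit form $G_o = 1/((n-1)\omega_n r^{n-1})$ in $B_o(\varepsilon)$ already exhibits the singular leading behavior stated in the proposition at each of the three derivative orders; the bounded contributions from $v$, $\nabla v$, and $\nabla^2 v$ are absorbed into the remainders implicit in the $\sim$ notation.

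The only technical point I anticipate is the weight matching in Lee's isomorphism, which is clean here because the explicit decay $w = O(\rho^{n+1})$ derived from \eqref{eq:Delta_rho}--\eqref{eq:L} leaves ample room in the admissible range $(0,n)$. Beyond that the argument is purely perturbative, with all the hard analytic work concentrated in Lemma \ref{lemma:Lee_isomorphism}; the remainder is standard maximum principle and elliptic regularity bookkeeping.
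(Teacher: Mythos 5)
Your proposal is correct and follows essentially the same route as the paper: perturb the explicit approximate solution $G_o$ using Lee's isomorphism (Lemma \ref{lemma:Lee_isomorphism}), deduce positivity and uniqueness from the maximum principle, and read off the asymptotics near the pole from $\mathcal{G}_o = G_o + v$ with $v$ regular. The only cosmetic difference is that the paper obtains smoothness of the corrector directly from the fact that $w$ is smooth, whereas you invoke elliptic regularity for $\mathcal{G}_o$ itself; both observations are equivalent and sufficient.
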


\begin{proof}
    We first prove that a solution to $-\Delta \mathcal{G}_o = \delta_o$ with $\mathcal{G}_o \to 0$ at infinity is unique.
    Indeed, if $\mathcal{G}_o$ and $\mathcal{G}_o'$ are two such solutions, then $\mathcal{G}_o - \mathcal{G}_o'$ is a harmonic function that converges to $0$ at infinity, and must vanish identically by the maximum principle.

    Let us now show the existence of $\mathcal{G}_o$.
    Let $\delta \in (0,n)$ be fixed, and recall that $-\Delta G_o = \delta_o + w$ with $w \colon M \to \mathbb{R}$ a smooth function with $w = O(\rho^{n+1})$.
    In particular, $w \in C^{0,\alpha}_{\delta}(M)$, and Lemma \ref{lemma:Lee_isomorphism} yields the existence of a unique $f\in C^{2,\alpha}_{\delta}(M)$ such that $\Delta f = w$.
    Notice that $f$ is smooth since $w$ is.
    By construction, $\mathcal{G}_o := G_o + f$ solves $\Delta \mathcal{G}_o = - \delta_o$ on $M$, and satisfies $\mathcal{G}_o \to +\infty$ at $o$ and $\mathcal{G}_o \to 0$ at infinity.
    The maximum principle then ensures that $\mathcal{G}_o >0$ on $M\setminus \{o\}$.
    Consequently, $\mathcal{G}_o$ solves \eqref{eq:Green_system}.

    To conclude, one only needs to show that \eqref{eq:asymptotic_pole_order_0}, \eqref{eq:asymptotic_pole_order_1}, and \eqref{eq:asymptotic_pole_order_2} hold.
    This is a straightforward consequence of the fact that $\mathcal{G}_o = G_o + f$ with $f$ a smooth function, and of the explicit form of $G_o = \frac{1}{(n-1)\omega_n r^{n-1}}$ near $o$.
\end{proof}

\subsection{Coarse asymptotics near the boundary}

This subsection is devoted to the proof of part \ref{part:near_infinity} of Theorem \ref{thm:asymptotic_phg}.

\begin{proposition}
    \label{prop:coarse_estimates}
    Let $\mathcal{G}_o$ be the unique solution to \eqref{eq:Green_system}.
    Then there exists $C >0$ such that \eqref{eq:asymptotic_infinity_order_0}, \eqref{eq:asymptotic_infinity_order_1}, and \eqref{eq:asymptotic_infinity_order_2} hold outside some compact region containing $o$.
\end{proposition}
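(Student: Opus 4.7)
The strategy is to upgrade the decomposition $\mathcal{G}_o = G_o + f$ from Proposition \ref{prop:Green_existence}, in which $G_o = \rho^n$ near infinity and $f\in C^{2,\alpha}_\delta(M)$ only for every $\delta < n$, to the sharper bound $|f| = O(\rho^n)$, and then to deduce the derivative estimates by interior Schauder theory. Using formula \eqref{eq:Delta_rho}, one has the elementary calculation
\[
\Delta \rho^s = s(s-n)\rho^s + s\sigma_0\, \rho^{s+1} + O(\rho^{s+2})
\qquad \text{for } s > 0,
\]
so in particular $\Delta \rho^n = O(\rho^{n+1})$; consequently the source $w$ driving $\Delta f = w$ lies in $\rho^{n+1}C^{k-2,\alpha}$ near $\partial M$, i.e., strictly below the upper indicial root $\rho^n$ of $\Delta$. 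The two indicial roots of $\Delta$ at $\partial M$ are $0$ and $n$, and the whole analysis hinges on the upper one.

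The heart of the argument is to improve the decay of $f$ from $O(\rho^{n-\sigma})$ (any $\sigma > 0$, which is what Lemma \ref{lemma:Lee_isomorphism} gives) to $O(\rho^n)$. I would look for a decomposition
\[
f = a\,\rho^n + \tilde f,
\]
where $a\colon \partial M \to \R$ is a boundary function to be determined and $\tilde f$ lies in a weighted Hölder space of exponent strictly above $n$. Substituting yields $\Delta \tilde f = w - \Delta(a\rho^n)$; using the expansion of $\Delta \rho^n$ above, one would choose $a$ so that the new source belongs to $C^{0,\alpha}_{n+\varepsilon}(M)$ for some $\varepsilon > 0$, i.e., again strictly above the upper indicial root. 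Inverting $\Delta$ across the critical weight $n$, which sits at the edge of the range where Lemma \ref{lemma:Lee_isomorphism} applies, then gives $\tilde f \in C^{2,\alpha}_{n+\varepsilon}(M)$. This step is a standard indicial-root extension of Lee's theorem, available through the $0$-calculus of Mazzeo and Melrose (which will also be used in part \ref{part:phg} of Theorem \ref{thm:asymptotic_phg}) and which can be localized for the $C^{k,\alpha}$ setting. One obtains $\mathcal{G}_o = (1+a)\rho^n + \tilde f$, hence the upper bound $\mathcal{G}_o \leq C\rho^n$. For the lower bound $\mathcal{G}_o \geq C^{-1}\rho^n$, positivity of $\mathcal{G}_o$ forces $1+a \geq 0$ on $\partial M$; strict positivity at every point follows from a boundary Hopf lemma applied to the positive harmonic function $\mathcal{G}_o$, since at a hypothetical zero $x_0$ of $1+a$ the function $\mathcal{G}_o$ would vanish to strictly higher order than $\rho^n$, which is prevented by unique continuation. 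Compactness of $\partial M$ then provides a uniform $c > 0$.

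Once the $C^0$-estimate is in hand, the gradient and Hessian bounds \eqref{eq:asymptotic_infinity_order_1}--\eqref{eq:asymptotic_infinity_order_2} follow from interior Schauder estimates for the harmonic function $\mathcal{G}_o$ on $g$-geodesic balls of radius one centered at points $p$ with small $\rho(p)$: on such a ball the metric is uniformly comparable to the hyperbolic model and hence uniformly bounded in $C^{k,\alpha}$, while $\|\mathcal{G}_o\|_{L^\infty(B_g(p,1))} \leq C\rho(p)^n$ by the $C^0$ bound just obtained, so Schauder yields $|\nabla^j \mathcal{G}_o|_g \leq C\rho(p)^n$ for $j = 1, 2$ on a concentric half-ball. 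I expect the main obstacle to be the indicial-root step, namely obtaining the refined decomposition $f = a\rho^n + \tilde f$ across the critical weight $n$: this is precisely where the naive application of Lemma \ref{lemma:Lee_isomorphism} fails and where the sharp exponent $\rho^n$ becomes accessible.
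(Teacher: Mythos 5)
Your proposal and the paper's proof diverge substantially. The paper establishes \eqref{eq:asymptotic_infinity_order_0} by a direct barrier argument: it sets
\[
G_{\pm}(\rho,y)\,=\,\mathcal{G}_o(\varepsilon,y)\,\frac{\rho^n\mp\rho^{n+1/2}}{\varepsilon^n\mp\varepsilon^{n+1/2}}\,,
\]
shows via \eqref{eq:Delta_rho} that $G_+$ is superharmonic and $G_-$ subharmonic on $\rho^{-1}((0,\varepsilon])$ once $\varepsilon$ is small, and invokes the maximum principle to get $G_-\leq\mathcal{G}_o\leq G_+$. This gives \emph{both} the upper and lower bound at once, with no further input. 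Your proposal instead tries to extract the leading coefficient of $\mathcal{G}_o$ by an indicial-root refinement of Lee's weighted isomorphism, then to control the lower bound via a boundary Hopf/unique-continuation argument, and finally to deduce the derivative bounds by local Schauder. The Schauder step is morally the same as the paper's use of \cite[Lemma~4.8(b)]{Lee06} applied to $\chi\mathcal{G}_o$ (you are essentially re-deriving that lemma), so that part is fine. But the $C^0$ part differs, and the paper's route is substantially more elementary.

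Two points in your proposal constitute genuine gaps. First, the decomposition $f = a\rho^n + \tilde f$ with $\tilde f\in C^{2,\alpha}_{n+\varepsilon}(M)$ is exactly the step that the paper reserves for Part~\ref{part:phg}, where $g$ is additionally assumed \emph{polyhomogeneous}. The Mazzeo--Melrose machinery that produces such boundary expansions at an indicial root works with smooth or polyhomogeneous coefficients; in Proposition~\ref{prop:coarse_estimates} the metric is only $C^{k,\alpha}$-conformally compact, so the indicial extraction you want is not available off the shelf, and it is precisely what the explicit barriers are engineered to avoid. Second, your lower bound argument is under-justified: to rule out a zero of $1+a$ you need a boundary Harnack-type statement asserting that a positive harmonic function on an asymptotically hyperbolic manifold cannot decay faster than $\rho^n$ at any single boundary point. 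That is not an interior unique continuation statement (which forbids vanishing to infinite order) nor a classical Hopf lemma (the boundary here is at infinity and $\mathcal{G}_o$ vanishes on all of it), and it would need a careful proof; the paper sidesteps the issue entirely because the subsolution $G_-$ directly produces the lower bound $\mathcal{G}_o\geq c\rho^n$ with $c = \min_{\rho^{-1}(\varepsilon)}\mathcal{G}_o/(\varepsilon^n+\varepsilon^{n+1/2})>0$.

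In short, your route would first require proving a low-regularity indicial decomposition and a boundary Harnack estimate, while the paper's barrier argument needs neither; for this proposition, the more elementary approach is the better one.
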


\begin{proof}
    Fix $\varepsilon \in (0,1) $ small enough so that $(0,\varepsilon]$ does not contain any critical value of $\rho$.
    Consider the complete manifold with boundary $N = \rho^{-1}((0,\varepsilon]) \simeq (0,\varepsilon] \times \partial M$, and define two functions $G_{\pm}$ by
    \begin{equation}
        \forall (\rho,y)\in N \simeq (0,\varepsilon] \times \partial M, \quad
        G_{\pm}(\rho,y) = \mathcal{G}_o(\varepsilon,y)\frac{\rho^n \mp \rho^{n+1/2}}{\varepsilon^n \mp \varepsilon^{n+1/2}}.
    \end{equation}
    Using \eqref{eq:Delta_rho}, one readily checks that
    \begin{equation}
        \begin{split}
            \Delta G_{\pm}(\rho,y)
            &= \mathcal{G}_o(\varepsilon,y) \frac{\mp\frac{2n+1}{4}\rho^{n+1/2} + \rho L(\rho^n \mp \rho^{n+1/2})}{\varepsilon^n \mp \varepsilon^{n+1/2}}
            + \rho^2 \Delta_{h_{\rho}}\mathcal{G}_o(\varepsilon,y) \frac{\rho^n \mp \rho^{n+1/2}}{\varepsilon^n \mp \varepsilon^{n+1/2}}\\
            &= \mp \frac{(2n+1)\mathcal{G}_o(\varepsilon,y)}{4(\varepsilon^n \mp \varepsilon^{n+1/2})} \rho^{n+1/2} + O(\rho^{n+1}).
        \end{split}
    \end{equation}
    Here, we have used that $L(\rho^{\alpha}) = O(\rho^{\alpha})$ for any $\alpha$, and $\Delta_{h_{\rho}}G(\varepsilon,y) = O(1)$ as $\rho \to 0$ since $h_{\rho} \to h_0$ in $C^{k,\alpha}$-topology.
    In particular, if $\varepsilon$ is chosen small enough, $G_+$ is superharmonic, and $G_-$ is subharmonic.
    By construction, they coincide with $\mathcal{G}_o$ on $\partial N$, and at infinity.
    It now follows from the maximum principle that $G_- \leq \mathcal{G}_o \leq G_+$ on $N$, which implies as a byproduct that
    \begin{equation}
        \frac{\min\{\mathcal{G}_o(x) \mid x \in \rho^{-1}(\varepsilon)\}}{\varepsilon^n + \varepsilon^{n+1/2}} \rho^n
        \leq \mathcal{G}_o
        \leq \frac{\max\{\mathcal{G}_o(x) \mid x \in \rho^{-1}(\varepsilon)\}}{\varepsilon^n - \varepsilon^{n+1/2}} \rho^n
        \quad \text{in} \quad \rho^{-1}((0,\varepsilon]),
    \end{equation}
    and \eqref{eq:asymptotic_infinity_order_0} now follows by choosing $C$ large enough.

    It remains to show that \eqref{eq:asymptotic_infinity_order_1} and \eqref{eq:asymptotic_infinity_order_2} hold.
    Consider a smooth function $\chi$  on $M$ satisfying $\chi \equiv 0$ near $o$, and $\chi \equiv 1$ near infinity.
    Then $\chi \mathcal{G}_o \in C^{0,0}_n(M)$ by the first part of the proof.
    In addition, $\Delta(\chi\mathcal{G}_o)$ is smooth and compactly supported in $M$, so that $\Delta(\chi\mathcal{G}_o)\in C^{k-2,\alpha}_n(M)$.
    It then follows from \cite[Lemma 4.8(b)]{Lee06} that there exists a constant $C >0$ such that
    \begin{equation}
        \|\chi\mathcal{G}_o\|_{C^{k,\alpha}_n(M)} \leqslant C\left(\|\Delta (\chi\mathcal{G}_o)\|_{C^{k-2,\alpha}_n(M)} + \|\chi\mathcal{G}_o\|_{C^{0,0}_n(M)}\right) < +\infty.
    \end{equation}
    In other words, $\chi\mathcal{G}_o \in C^{k,\alpha}_n(M)$ with $k \geqslant 2$.
    In particular, $\nabla \mathcal{G}_o, \nabla^2 \mathcal{G}_o = O(\rho^n)$ as $\rho \to 0$, which concludes the proof.
\end{proof}

\subsection{Precise asymptotics near infinity for polyhomogeneous metrics}
We now aim at proving part \ref{part:phg} of Theorem \ref{thm:asymptotic_phg}.
We stress that we now assume $g$ to be polyhomogeneous in a neighborhood of the  boundary at infinity.

\begin{proposition}
    \label{prop:phg}
    If $g$ is assumed polyhomogeneous at infinity, then there exist two smooth functions $\bar{v}_n,\bar{v}_{n+1} \colon \partial M \to \mathbb{R}$, with $\bar{v}_n >0$, and $\varepsilon >0$, such that \eqref{eq:asymptotic_phg} holds.
\end{proposition}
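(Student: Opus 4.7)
The plan is to combine the coarse estimate $\mathcal{G}_o \in C^{k,\alpha}_n(M)$ from Proposition~\ref{prop:coarse_estimates} with the general polyhomogeneity regularity for $0$-elliptic operators whose coefficients are themselves polyhomogeneous, and then to read off the first two terms of the expansion by a direct indicial computation using \eqref{eq:Delta_rho}.

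First, I would observe that outside a compact set containing $o$, the function $\mathcal{G}_o$ is smooth and harmonic, and by Proposition~\ref{prop:coarse_estimates} it belongs to $C^{k,\alpha}_n(M)$ near $\partial M$. Since $g$ is assumed polyhomogeneous, all coefficients appearing in \eqref{eq:Delta_rho} are polyhomogeneous as well. I would then invoke the polyhomogeneous boundary regularity theorem for the $0$-Laplacian (as developed via the $0$-calculus of Mazzeo--Melrose, see \cite{mazzeo_elliptic_1991, usula_boundary_2024} and \cite[Appendix A]{AILA18}) to conclude that $\mathcal{G}_o$ admits a polyhomogeneous expansion near $\partial M$ in the sense of Section~\ref{subsectPolyhomogeneity}, with admissible exponents controlled by the indicial roots of $\Delta$ and their integer shifts generated by the subleading terms of \eqref{eq:Delta_rho}.

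Next, from \eqref{eq:Delta_rho} the indicial operator of $\Delta$ at $\partial M$ is $I(\Delta)(s) = s^2 - ns$, with simple roots $s = 0$ and $s = n$. The coarse estimate $\mathcal{G}_o = O(\rho^n)$ rules out exponents strictly below $n$, while the lower bound $\mathcal{G}_o \geq C^{-1}\rho^n$ of \eqref{eq:asymptotic_infinity_order_0} forces the leading exponent to be exactly $n$ with positive coefficient. Since $\mathcal{G}_o$ vanishes at infinity, the companion indicial solution at $s = 0$ is absent from the expansion, so no $\log\rho$ factor can accumulate at order $\rho^n$, and the leading term reads $\bar{v}_n\rho^n$ with $\bar{v}_n$ a smooth positive function on $\partial M$. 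For the subleading coefficient, I would substitute $\mathcal{G}_o = \bar{v}_n \rho^n + \bar{v}_{n+1}\rho^{n+1} + R$ into $\Delta \mathcal{G}_o = 0$ and use \eqref{eq:Delta_rho}--\eqref{eq:L}: the operator $(\rho\partial_\rho)^2 - n\rho\partial_\rho$ annihilates $\bar{v}_n \rho^n$, $\rho^2 \Delta_{h_\rho}$ contributes only at order $\rho^{n+2}$, and $\rho L(\bar{v}_n\rho^n) = n\sigma_0 \bar{v}_n \rho^{n+1} + O(\rho^{n+2})$. Matching the $\rho^{n+1}$ coefficient with the indicial action $I(\Delta)(n+1)\bar{v}_{n+1} = (n+1)\bar{v}_{n+1}$ yields
\begin{equation*}
    \bar{v}_{n+1} = -\frac{n\sigma_0}{n+1}\,\bar{v}_n,
\end{equation*}
which is smooth on $\partial M$. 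Since $n+1$ is not an indicial root and the admissible exponents of $R$ form a discrete set bounded below by some $n + 1 + \varepsilon$ with $\varepsilon > 0$, polyhomogeneity yields $R = O_\infty(\rho^{n+1+\varepsilon})$.

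The main obstacle I anticipate is the first step: one must verify carefully that the hypotheses of the polyhomogeneity regularity theorem apply, namely that $\Delta$ is a $0$-elliptic operator with polyhomogeneous coefficients on the compactification and that $\mathcal{G}_o$ carries enough conormal regularity near $\partial M$ (beyond merely belonging to $C^{k,\alpha}_n(M)$) to trigger the conclusion. This is where the $0$-calculus machinery is genuinely invoked; once it is in place, the remainder of the proof reduces to the elementary indicial-root computation sketched above.
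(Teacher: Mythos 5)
Your proposal follows essentially the same strategy as the paper's proof: first invoke the polyhomogeneous regularity theorem (the paper cites \cite[Theorem A.14]{AILA18}, after cutting $\mathcal{G}_o$ off near the pole so that $\Delta v$ is compactly supported), then use the coarse estimates of Proposition~\ref{prop:coarse_estimates} to pin down the leading exponent $n$ with positive coefficient, and finally carry out the indicial computation from \eqref{eq:Delta_rho}--\eqref{eq:L} to extract $\bar{v}_{n+1} = -\tfrac{n\sigma_0}{n+1}\bar{v}_n$ and the $O_\infty(\rho^{n+1+\varepsilon})$ remainder. The one place where the paper is more careful than your sketch is in justifying that no terms $\rho^{s_i}(\log\rho)^j$ with $\mathrm{Re}(s_i)\in[n,n+1]$ other than $\rho^n$ and $\rho^{n+1}$ survive (in particular, complex exponents with $\mathrm{Re}(s_i)=n$ and non-integer or logarithmic terms in $(n,n+1]$); you assume the expansion already has the form $\bar{v}_n\rho^n+\bar{v}_{n+1}\rho^{n+1}+R$, whereas the paper derives it term by term via the indicial argument applied to $\Delta v=0$.
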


\begin{proof}
    The proof goes as follows.
    Set $v = \chi \mathcal{G}_o$, where $\chi \colon M \to [0,1]$ is smooth, identically vanishes near $o$, and is constant equal to $1$ outside some compact set.
    By construction, $v$ is a smooth function on $M$, and coincides with $\mathcal{G}_o$ near infinity.
    One thus only needs to show that \eqref{eq:asymptotic_phg} holds for $v$.
    We first apply a result of \cite{AILA18} to obtain polyhomogeneity for $v$.
    Then, we exploit the expression \eqref{eq:Delta_rho} to show that logarithmic terms in the polyhomogeneous expansion of $v$ cannot appear at order less or equal than $n+1$.

    The indicial radius of the Laplace-Beltrami operator of an asymptotically hyperbolic manifold of dimension $n+1$ acting on functions is $n/2$ \cite[Lemma 7.2]{Lee06}.
    Hence, \cite[Theorem A.14]{AILA18} states that if $f$ is polyhomogeneous and $0<\delta<n$, any solution of $\Delta u = f$ with $u \in C^{2,\alpha}_{\delta}(M)$ is polyhomogeneous.
    Now, recall that $\Delta v$ has compact support in $M$, and is therefore trivially polyhomogeneous.
    Since $v \in C^{2,\alpha}_n(M)$ (see Proposition \ref{prop:coarse_estimates}), then $v \in C^{2,\alpha}_{\delta}(M)$ for any $0<\delta<n$.
    It thus follows that $v$ is polyhomogeneous:
    there exist a sequence of complex numbers $\{s_i\}_{i\geq 0}$ with $\mathrm{Re}(s_i) \nearrow \infty$, a sequence of nonnegative integers $\{N_i\}_{i\geq 0}$, and a sequence of smooth complex valued functions $\{\bar{v}_{i,j}\}_{i \geq 0, 0\leq j \leq N_i}$ on $\partial M$, such that
    \begin{equation}
        v \sim \sum_{i\geq 0} \sum_{j=0}^{N_i} \bar{v}_{i,j} \, \rho^{s_i} (\log\rho) ^j.
    \end{equation}
    We first remark that $v = O(\rho^n)$ by \eqref{eq:asymptotic_infinity_order_0}, so that all coefficients $\bar{v}_{i,j}$ with $\mathrm{Re}(s_i) \leq n$ vanish unless $\mathrm{Re}(s_i) = n$ and $j=0$.
    Let us show that actually, these coefficients vanish unless $(s_i,j) = (n,0$).
    To that end, let us write $v = v_n + v_{+}$, where
    \begin{equation}
        v_n = \sum_{\mathrm{Re}(s_i) = n} \bar{v}_{i,0} \rho^{s_i}
        \quad \text{and} \quad
        v_+ \sim \sum_{\mathrm{Re}(s_i) > n} \sum_{j=0}^{N_i} \bar{v}_{i,j} \rho^{s_i}  (\log\rho)^j.
    \end{equation}
    Then there exists $\varepsilon_+ >0$ such that $v_+,\Delta v_+ = O(\rho^{n+\varepsilon_+})$, and equations \eqref{eq:Delta_rho} and \eqref{eq:L} yield
    \begin{equation}
        \begin{split}
            \Delta v & = \Delta v_n + \Delta v_+ \\
            & = \sum_{\mathrm{Re}(s_i)=n}((\rho\partial_{\rho})^2 - n\rho\partial_{\rho} + \rho L + \rho^2\Delta_{h_{\rho}})(\bar{v}_{i,0}\rho^{s_i}) + \Delta v_+ \\
            & = \sum_{\mathrm{Re}(s_i)=n}\left(\bar{v}_{i,0}((\rho\partial_{\rho})^2 - n\rho\partial_{\rho})(\rho^{s_i}) + \bar{v}_{s,0} \rho L (\rho^{s_i}) + (\Delta_{h_{\rho}}\bar{v}_{i,0}) \rho^{s_i+2}\right) + \Delta v_+ \\
            & = \sum_{\mathrm{Re}(s_i)=n}\bar{v}_{i,0} s_i(s_i-n) \rho^{s_i} + o(\rho^n).
        \end{split}
    \end{equation}
    Since $\Delta v$ identically vanishes near infinity, it follows that $\bar{v}_{i,0}=0$ whenever $\mathrm{Re}(s_i) = n$, $s_i \neq n$.
    Let us then write $\bar{v}_n$ for the coefficient corresponding to $s_i = n$, and, up to relabeling, we may now write $v = \bar{v}_n \rho^n + v' + v''$, with
    \begin{equation}
        v' = \sum_{i=1}^k \sum_{j=0}^{N_i} \bar{v}_{i,j} \rho^{s_i} (\log\rho)^j
        \quad \text{and} \quad
        v'' \sim \sum_{i > k} \sum_{j=0}^{N_i} \bar{v}_{i,j} \rho^{s_i} (\log\rho)^j
    \end{equation}
    satisfying $n < \mathrm{Re}(s_i) \leq n+1$ for $1 \leq i \leq k$, and $v'' = O(\rho^{n+1+\varepsilon})$ for some $\varepsilon > 0$.
    Then $\Delta v'' = O(\rho^{n+1+\varepsilon})$ and using \eqref{eq:Delta_rho} yields
    \begin{equation}
        \begin{split}
            \Delta v
            & = \Delta(\bar{v}_n\rho^n) + \Delta v' + \Delta v'' \\
            & =  ((\rho\partial_{\rho})^2 - n\rho\partial_{\rho} + \rho L + \rho^2\Delta_{h_{\rho}})(\bar{v}_n\rho^n) \\
            & \quad + \sum_{i=1}^k \sum_{j=0}^{N_i} ((\rho\partial_{\rho})^2 - n\rho\partial_{\rho} + \rho L + \rho^2\Delta_{h_{\rho}})(\bar{v}_{i,j}\rho^{s_i}(\log\rho)^j) + \Delta v''. \\
        \end{split}
    \end{equation}
    Notice first that \eqref{eq:L} yields
    \begin{equation}
        ((\rho\partial_{\rho})^2 - n\rho\partial_{\rho} + \rho L + \rho^2\Delta_{h_{\rho}})(\bar{v}_n\rho^n)
        = n\sigma_0\bar{v}_n\rho^{n+1} + O(\rho^{n+2}).
    \end{equation}
    Similarly, if $1 \leq i \leq k$,
    \begin{equation}
        ((\rho\partial_{\rho})^2 - n\rho\partial_{\rho} + \rho L + \rho^2\Delta_{h_{\rho}})(\bar{v}_{i,j}\rho^{s_i}(\log\rho)^j)
        = \bar{v}_{i,j}((\rho\partial_{\rho})^2 - n\rho\partial_{\rho})(\rho^{s_i}(\log\rho)^j) + o(\rho^{n+1}).
    \end{equation}
    It now follows that
    \begin{equation}
        \Delta v = n\sigma_0 \bar{v}_n \rho^{n+1} + \sum_{i=1}^k \sum_{j=0}^{N_i} \bar{v}_{i,j}((\rho\partial_{\rho})^2 - n\rho\partial_{\rho})(\rho^{s_i}(\log\rho)^j) + o(\rho^{n+1}),
    \end{equation}
    with terms $((\rho\partial_{\rho})^2 - n\rho\partial_{\rho})(\rho^{s_i}(\log\rho)^j)$ that are not of order $o(\rho^{n+1})$.
    Since $\Delta v$ is polyhomogeneous with identically vanishing polyhomogeneous expansion, it is necessary that
    \begin{equation}
        \label{eq:coeff_phg}
        n\sigma_0 \bar{v}_n \rho^{n+1} + \sum_{i=1}^k \sum_{j=0}^{N_i} \bar{v}_{i,j}((\rho\partial_{\rho})^2 - n\rho\partial_{\rho})(\rho^{s_i}(\log\rho)^j) = 0.
    \end{equation}
    Fix $ 1 \leq i \leq k$ such that $s_i \neq n+1$.
    The coefficient of order $(s_i,N_i)$ in \eqref{eq:coeff_phg}, which has to vanish, is $\bar{v}_{i,N_i}s_i(s_i-n)$.
    Hence, it is necessary that $\bar{v}_{i,N_i}= 0$.
    A finite induction process then shows that $\bar{v}_{i,j} = 0$ for all $0 \leq j \leq N_i$.
    In addition, an identical argument shows that $\bar{v}_{i,j} = 0$ whenever $s_i = n+1$ and $j \geq 1$, and that
    \begin{equation}
        n\sigma_0 \bar{v}_n + (n+1) \bar{v}_{n+1}= 0,
    \end{equation}
    where $\bar{v}_{n+1}$ denotes the coefficient $\bar{v}_{i,0}$ such that $s_i = n+1$.
    This proves that $v' = \bar{v}_{n+1}\rho^{n+1}$.
    We have then shown that there exist two smooth functions $\bar{v}_n,\bar{v}_{n+1} \colon \partial M \to \mathbb{C}$, and $\varepsilon > 0$, such that
    \begin{equation}
        \label{eq:v_expansion}
        v = \bar{v}_n \rho^n + \bar{v}_{n+1} \rho^{n+1} + O(\rho^{n+1+\varepsilon}).
    \end{equation}
    Since $v$ is real valued, \eqref{eq:v_expansion} immediatly implies that $\bar{v}_n$ and $\bar{v}_{n+1}$ are real valued as well.
    Moreover, $\bar{v}_n >0$ by virtue of \eqref{eq:asymptotic_infinity_order_0}.

    Finally, one can differentiate \eqref{eq:v_expansion} term by term in $M$ by polyhomogeneity of $v$.
    This concludes the proof.
\end{proof}

\begin{corollary}
    \label{cor:no_critical_points}
    Under the assumptions of Theorem \ref{thm:asymptotic_phg} \ref{part:phg}, $\mathcal{G}_o$ has no critical point in a neighborhood of the boundary at infinity.
\end{corollary}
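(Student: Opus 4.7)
The plan is to read off the non-vanishing of $\nabla \mathcal{G}_o$ near the boundary at infinity directly from the polyhomogeneous expansion \eqref{eq:asymptotic_phg}, using that the $O_\infty$ remainder can be differentiated term by term. The key input is the positivity of the leading coefficient $\bar{v}_n$ on the compact boundary $\partial M$.

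Working in the collar neighborhood $[0,\varepsilon)\times \partial M$ with the geodesic boundary defining function, the metric takes the form \eqref{eq:h_rho}, so that
\begin{equation*}
|d f|_g^2 \,=\, \rho^2\bigl( (\partial_\rho f)^2 + |d_{\partial M} f|_{h_\rho}^2\bigr)
\end{equation*}
for any smooth function $f$. Differentiating \eqref{eq:asymptotic_phg} in $\rho$ and tangentially, and using that $\bar{v}_n, \bar{v}_{n+1}\in C^\infty(\partial M)$, I would show
\begin{align*}
\partial_\rho \mathcal{G}_o &\,=\, n\,\bar{v}_n\,\rho^{n-1} + (n+1)\bar{v}_{n+1}\rho^n + O(\rho^{n+\varepsilon}),\\
d_{\partial M}\mathcal{G}_o &\,=\, \rho^n\, d_{\partial M}\bar{v}_n + O(\rho^{n+1}).
\end{align*}
The $\partial_\rho$ term therefore dominates: substituting and using that $h_\rho$ converges to $h_0$ in $C^{k,\alpha}$ along the boundary (so $|d_{\partial M}\bar{v}_n|_{h_\rho}$ is uniformly bounded near $\rho=0$), one gets the asymptotic
\begin{equation*}
|\nabla \mathcal{G}_o|_g^2 \,=\, n^2\,\bar{v}_n^2\,\rho^{2n} + O(\rho^{2n+1}).
\end{equation*}

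Since $\partial M$ is compact and $\bar{v}_n$ is smooth and strictly positive on $\partial M$, there exists a constant $c>0$ with $\bar{v}_n\geq c$ on $\partial M$. Hence $|\nabla \mathcal{G}_o|_g \geq (nc/2)\rho^{n}$ on $\rho^{-1}((0,\varepsilon'])$ for $\varepsilon'>0$ chosen small enough to absorb the remainder, and in particular $\nabla \mathcal{G}_o$ does not vanish on this collar neighborhood of the boundary at infinity. The only non-routine point is justifying that the remainder $O_\infty(\rho^{n+1+\varepsilon})$ in \eqref{eq:asymptotic_phg} remains of order $O(\rho^{n+\varepsilon})$ after one $\partial_\rho$ differentiation and $O(\rho^{n+1+\varepsilon})$ after one tangential differentiation, which is precisely the content of the $O_\infty$ notation from Section~\ref{subsectPolyhomogeneity} (namely, conormality of the remainder with respect to smooth vector fields tangent to $\partial M$, together with $\rho\,\partial_\rho$). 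This is the only real subtlety, and it is an immediate consequence of polyhomogeneity; the rest is a direct comparison of leading orders.
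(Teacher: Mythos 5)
Your proof is correct and follows essentially the same route as the paper: differentiate the polyhomogeneous expansion \eqref{eq:asymptotic_phg}, note that the leading term of the $\rho$-derivative is $n\bar{v}_n\rho^{n-1}$ with $\bar{v}_n>0$ on the compact boundary, and conclude that $d\mathcal{G}_o$ cannot vanish near infinity. The paper stops at observing $d\mathcal{G}_o(\partial_\rho)=n\bar{v}_n\rho^{n-1}+O(\rho^n)\neq 0$, whereas you additionally compute the full gradient norm and spell out the term-by-term differentiability of the $O_\infty$ remainder; this extra detail is sound but the argument is the same.
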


\begin{proof}
    Differentiating \eqref{eq:asymptotic_phg} yields $d \mathcal{G}_o(\partial_{\rho}) =n \bar{v}_n \rho^{n-1} + O(\rho^n)$.
    To conclude, recall that $\bar{v}_n >0$, so that $d\mathcal{G}_o$ does not vanish in a neighborhood of the boundary.
\end{proof}

\section{A monotonicity formula for the Green functions}\label{effmon}
 
This section is devoted to the proof of Theorem \eqref{thm_monotonicity}. Before seeing the proof, we need to observe that the function $F$ is well-defined. We start noticing that $u$ is smooth on $M \setminus \{ o\}$ and $u$ assumes values in the interval $(-\infty,1)$, by the maximum principle. 
Moreover, the level sets of $u$ are compact, while the sub-level sets are contained in compact subsets of $M$. 
Therefore, by~\cite[Theorem~1.7]{Hardt1}, every level set have finite $2$-dimensional Hausdorff measure. Now, recalling that the set $\mathrm{Crit}(u)=\{\vert \nabla u \vert=0\}$ has locally finite $1$-dimensional Hausdorff measure (see for instance~\cite[Theorem~1.1]{Hardt2}), it is not difficult to see that the first three terms of the function $F$, given by \eqref{eq0}, are well-defined. Indeed, these terms are obtained by integrating
$\mathcal{H}^2$-almost everywhere defined and bounded functions on sets with finite measure, keeping in mind that, since $\mathrm{H}$ can be expressed as 
\begin{equation}\label{H}
\mathrm{H} \, = \, - \, \frac{\nabla du\, (\nabla u,\nabla u)}{\vert \nabla u\vert^{3}}\,=\,-\,\frac{\langle\nabla \vert \nabla u\vert, \nabla u\rangle}{ \vert \nabla u\vert^{2}}
\end{equation}
away from $\mathrm{Crit}(u)$, as $u$ is harmonic, it turns out that
\begin{equation}
\label{eq:hdubound}
\big\vert \,\vert \nabla u \vert \mathrm{H}\,\big\vert\leq \vert\nabla d u (\nu,\nu) \vert\leq \vert \nabla du \vert  \, .
\end{equation}
Finally, by the asymptotic behavior of $\mathcal{G}_o$ near the pole (see, for instance, \cite[Appendix]{MarRigSet}), in a sufficiently small punctured neighborhood of $o$, we have
\begin{equation}
\frac{C_1}r\leq 1-u\leq\frac{C_2}r,
\qquad
\frac{C_3}{r^2}\leq\vert \nabla u \vert \leq \frac{C_4}{r^2},
\end{equation}
for some positive constants $C_{i}>0$, $i=1, \ldots, 4$. Then, combining these bounds yields
\begin{equation}
\frac{\vert \nabla u \vert}{(2-u)^2-1} \leq \frac{C_4}{r^2} \frac{1}{(1+\frac{C_1}r)^2-1}=\frac{C_4}{C_1^2(1+\frac{2r}{C_1})}\leq \frac{C_4}{C_1^2}\,,
\end{equation}
and we can conclude that the last two terms of $F$ are well defined, since they also obtained by integrating bounded functions over sets with finite measure.
\smallskip

Before presenting the proof of Theorem \eqref{thm_monotonicity}, we  further observe that, for every value $s\in (-\infty,1)$ of $u$, the open set $\{u<s\}$ is connected, and every connected components of $\{u>s\}$ is unbounded. These facts are consequence of the maximum principles and the size of the set $\mathrm{Crit}(u)$. Indeed, there exists a unique connected component of $\{u<s\}$, such that the pole $o$ is an interior point in its closure. Then, every other connected component has compact closure contained in $M\setminus \{o\}$, and its boundary is contained in $\{u=s\}$. By the maximum principle, it then follows that $u$ is constant therein, but this is no possible since $\mathrm{Crit}(u)\cap \overline{\{u<s\}}$ has finite $1$-dimensional Hausdorff measure.
Similarly, one can prove that there are no bounded connected components of the open $\{u>s\}$.
\smallskip

We are now in a position to present the proof of Theorem \eqref{thm_monotonicity}.

\begin{proof}[Proof of Theorem \ref{thm_monotonicity}]
We start by observing that it is enough to show that the function $F$ admits a locally absolutely
continuous representative in $(0,+\infty)$, which coincides with $F$ on the set 
\begin{equation}\label{defmathcalT}
\mathcal{T}=\big\{t\in (0,+\infty)\,:\,  \text{$2- \coth t$ is a regular value of $u$}\big\}\,.
\end{equation}
We recall that $\mathcal{T}$ is an open set of $(0,\infty)$, by the same argument as in \cite[Theorem 2.3]{AMO}, and its complementary in $(0,+\infty)$ has zero Lebesgue measure, by Sard's theorem.
Now, on the open set $\mathcal{T}$ the function $F$ is continuously differentiable, and from evolution equations for hypersurfaces moving in normal directions (see for example~\cite[Theorem 3.2]{Hui_Pol}) it follows that, for every $t\in\mathcal{T}$,
\begin{align}
\frac{d}{dt} \,\int\limits_{\Sigma_{t}} \vert \nabla u \vert^{2} \, d\mathcal{H}^2 \, &  = \, - \, \frac{1}{\sinh^{2}t} \,\int\limits_{\Sigma_{t}} \vert \nabla u \vert\,\mathrm{H}\,  d\mathcal{H}^2\, ,\nonumber \\
\frac{d}{dt} \,\int\limits_{\Sigma_{t}} \vert \nabla u \vert\, \mathrm{H} \, d\mathcal{H}^2 & \, = \, -\,\frac{1}{\sinh^2 t}\, \int\limits_{\Sigma_{t}} \vert \nabla u \vert  \left[ \, \Delta_{\Sigma_{t}} \!\left(\frac{1}{\vert \nabla u \vert}\right) + \, \frac{\vert \mathrm{h}\vert^{2}+\Ric (\nu,\nu)}{\vert \nabla u\vert} \, \right]\,d\mathcal{H}^2\,,\quad\label{eq20}
\end{align}
where $\Sigma_{t}=\{u=2-\coth t\}$ and $\Delta_{\Sigma_{t}}$ is the Laplace--Beltrami operator of the metric induced on $\Sigma_{t}$.
With the help of the traced Gauss equation, the integrand on the right hand side of~\eqref{eq20} can be expressed as 
\begin{align*}
\vert \nabla u \vert & \left[ \, \Delta_{\Sigma_{t}} \!\left(\frac{1}{\vert \nabla u \vert}\right) + \, \frac{\vert \mathrm{h}\vert^{2}+\Ric (\nu,\nu)}{\vert \nabla u\vert} \, \right]
= &\\
 & \qquad \qquad \qquad \qquad = \,-\, \Delta_{\Sigma_{t}}(\log \vert \nabla u \vert ) \,+\,\frac{\vert\,\nabla^{\Sigma_t}\vert \nabla u\vert\,\vert^{2}}{\vert \nabla u \vert^{2}} \,+\,\frac{\Ro}{2}\, -\,\frac{\,\rm{R}^{\Sigma_{t}}}{2} \,+\, \frac{\,\vert \ringg{\mathrm{h}}\vert^{2}}{2}   \,+\,\frac{3}{4}\,\mathrm{H}^{2}\,.\nonumber
\end{align*}
Substituting the latter expression into~\eqref{eq20} and by the coarea formula, we get 
\begin{align*}
F'(t) \, &= \, 4\pi\,+\,(3\sinh^2 t \cosh^2 t+\sinh^4 t) \int\limits_{\Sigma_{t}}\vert \nabla u \vert^{2} \,\, d\mathcal{H}^2 \, \,-\,\,3\sinh t \cosh t \int\limits_{\Sigma_{t}}\vert \nabla u \vert\, \mathrm{H}   \,\, d\mathcal{H}^2\\
&\quad +\, \int\limits_{\Sigma_{t}} \bigg[\frac{\vert\,\nabla^{\Sigma_t}\vert \nabla u\vert\,\vert^{2}}{\vert \nabla u \vert^{2}} +\frac{\Ro}{2} -\!\frac{\,\rm{R}^{\Sigma_{t}}}{2} + \frac{\,\vert \ringg{\mathrm{h}}\vert^{2}}{2}   +\frac{3}{4}\mathrm{H}^{2}\Bigg]\,\, d\mathcal{H}^2\,\,+\,\,3\, \mathrm{Area}\big(\Sigma_{t}\big)\\
&\quad -\,\,\sinh^4t \int\limits_{\Sigma_{t}}\vert \nabla u \vert^{2} \,\, d\mathcal{H}^2\,,
\end{align*}
for every $t\in\mathcal{T}$, which implies 
\begin{align}
F'(t) &=  4\pi - \int\limits_{\Sigma_t}\frac{\,\rm{R}^{\Sigma_{t}}}{2} \, d\mathcal{H}^2  \,+\,  \int\limits_{\Sigma_t}\frac{\,\rm{R} +6}{2} \, d\mathcal{H}^2\,+  \,\int\limits_{\Sigma_t}\bigg[ \, \frac{\vert\,\nabla^{\Sigma_t}\vert \nabla u\vert\,\vert^{2}}{\vert \nabla u \vert^{2}}+\frac{\,\vert \ringg{\mathrm{h}}\vert^{2}}{2}\,\,\bigg]\, d\mathcal{H}^2\,,\\
&\quad\,+\,\frac{3}{4}\int\limits_{\Sigma_t}\!\bigg(\,\mathrm{H}-\frac{2(2-u)}{(2-u)^2-1} \,\vert \nabla u \vert\bigg)^{\!2}d\mathcal{H}^2\,.\label{monoliscia}
\end{align}
Now, we notice that the last three summands of the right hand side are always non--negative, as the scalar curvature of $(M,g)$ is greater than or equal to $-6$ by assumption. The first two summands also give a non-negative contribution, by virtue of Gauss-Bonnet theorem. Indeed, by \cite[Lemma 2.3]{MW24} and by what was observed immediately before this proof, every regular level set of $u$ is either connected, or, if it is not connected, each connected component is not diffeomorphic to a $2$-sphere. Thus, $F'(t)\geq 0$ for every $t\in\mathcal{T}$, which, together with the fact that $F$ admits a locally absolutely continuous representative in $(0,+\infty)$ coinciding with it on $\mathcal{T}$, yields the desired monotonicity. \\
Let us now prove that $F$ admits a locally absolutely
continuous representative in $(0,+\infty)$, and we immediately observe that this representative coincides with $F$ on the set $\mathcal{T}$, as $\mathcal{T}$ is an open subset of $(0,+\infty)$ and therein the function $F$ is continuous. 
Moreover, from \cite[Lemma 12]{ChLi} it follows that the function
\begin{equation}\label{eq1}
s\in (-\infty,1)\longmapsto \int\limits_{\{u=s\}}\!\!\!\vert \nabla u \vert^{2} \,d\mathcal{H}^2\in (0,\infty)
\end{equation}
is locally Lipschitz, hence, the second term in expression \eqref{eq0} of $F$ determine a locally Lipschtz function in the open interval $(0,+\infty)$. At same time, since the coarea formula 
implies that the functions
\begin{align}
s\in (-\infty,1)&\longmapsto \frac{1}{(2-s)^2-1}\,\mathrm{Area}(\{u=s\})\in (0,\infty)\\
s\in (-\infty,1)&\longmapsto \frac{1}{[(2-s)^2-1]^3} \int\limits_{\{u=s\}} \!\!\!\vert \nabla u \vert^2 \,\,d\mathcal{H}^2\in (0,\infty)
\end{align}
are locally integrable, the functions
\begin{align}
s\in (-\infty,1)\longmapsto \int\limits_{\{u<s\}}\!\!\!\frac{\vert \nabla u \vert}{(2-u)^2-1} \,\, d\mu\quad\quad \text{and}\quad\quad s\in (-\infty,1)\longmapsto\int\limits_{\{u<s\}}\!\!\!\frac{\vert \nabla u \vert^3}{[(2-u)^2-1]^3} \,\, d\mu\,,
\end{align}
are locally absolutely continuous. This implies that the last two terms in expression \eqref{eq0} of $F$ determine functions that are locally absolutely continuous in the open interval $(0,+\infty)$. Then, the statement to be proved follows once we have shown that the auxiliary function
$$\widetilde{F}\,:\,s\in (-\infty,1)\longmapsto -\int\limits_{\{u=s\}}\!\!\!\vert \nabla u \vert\, \mathrm{H}   \,\, d\mathcal{H}^2\in\R$$
belongs $W^{1,1}_{loc}(-\infty,1)$. To show this, let us consider the vector field $Y$, given by
\begin{align}\label{Y}
Y\,=\,\nabla \vert \nabla u\vert\,,
\end{align}
which is well defined and smooth on the open set $M_{o}\setminus \mathrm{Crit}(u)$, where $M_{o}$ is defined as
$$M_{o}=\,M\setminus\{o\} \, .
$$ 
Notice that 
\begin{equation}
\widetilde{F}(s)\,=\int\limits_{\{u=s\}}\! \!\!\bigg\langle Y , \, \frac{\nabla u}{|\nabla u|}\bigg\rangle \,\, d\mathcal{H}^2
\end{equation}
everywhere and the divergence of $Y$ on $M_{o}\setminus \mathrm{Crit}(u)$ can be expressed as 
\begin{equation}
\mathrm{div}(Y)=\,\vert \nabla u \vert^{-1}\Big(
\vert \nabla du\vert^{2}\,-\,\vert\,\nabla\vert \nabla u\vert\,\vert^{2}\,+\,\Ric(\na u,\na u)\Big)\,,
\end{equation}
by the Bochner formula and the fact that $u$ is harmonic. We claim that 
\begin{equation}
\label{eq:claim}
\mathrm{div}(Y)\in L^{1}_{loc}(M_o) \,.
\end{equation}
Indeed, if $K$ is a compact subset of $M_o$, then, by Sard's Theorem, it is contained in the set 
$
E_{s}^{S}=\left\{s<u<S\right\} ,
$
for some regular values $s,S$ of $u$ such that $-\infty<s<S<1$. 
Similarly to the proof of \cite[Theorem 1.1]{AMO24}, let us consider a sequence of cut-off functions $\{ \eta_k\}_{k \in \mathbb{N}^+}$, where, for every $k \in \mathbb{N}^+$, the  function $\eta_k:[0,+\infty) \to [0,1]$ is smooth, non-decreasing, and such that
\begin{align}
\eta_k (\tau) \equiv 0\quad \text{in $\left[0 \,,\frac{1}{2k}\,\right]$}\,,\qquad 
0\leq \eta_k'(\tau)\leq 2 k\quad \text{in $\left[\,\frac{1}{2k}\,,\frac{3}{2k}\,\right]$}\,,
\qquad
\eta_k(\tau) \equiv 1\quad\text{in $\left[\,\frac{3}{2k} \, ,+\infty\!\right)$}\,.\,\nonumber
\end{align}
We use these cut-off functions to define, for every $k \in\mathbb{N}^+$, the vector field
\begin{equation}
Y_{k}\, = \, \, \eta_{{k}}\big( \vert \nabla u\vert\big)\,Y \, ,\,\nonumber
\end{equation}
which is smooth on all $M_{o}$. 
For any such $Y_k$, the divergence is given by
\begin{align}\label{eq2}
\mathrm{div}(Y_{k})
&=\, \eta_{{k}}\big( \vert \nabla u\vert\big)\mathrm{div}(Y )\,+\,\eta_{{k}}'\big( \vert \nabla u\vert\big)\,\vert\,\nabla\vert \nabla u\vert\,\vert^{2}\,,
\end{align}
and, on any compact subset of $M_{o}\setminus \mathrm{Crit}(u)$, it coincides with the vector field $Y$, provided $k$ is large enough.
By these considerations and applying the divergence theorem, it holds that
\begin{equation}
\label{tfci}
\widetilde{F}(S)-\widetilde{F}(s) \,=\, \int\limits_{E_s^S} \!\mathrm{div}(Y_k) \, d\mu \, \geq \, \int\limits_{E_s^S} \! P_k \, d\mu \,+ \,\int\limits_{E_s^S}\!D_k \, d\mu\,,
\end{equation}
where we set 
\begin{align}
P_k & \, = \, \eta_{{k}}\big(\vert \nabla u\vert\big)\,P\,,\quad \text{with}\quad P\,=\,\vert \nabla u \vert^{-1}\Big( \vert \nabla du\vert^{2}-\vert\,\nabla\vert \nabla u\vert\,\vert^{2}\Big) \,,\nonumber\\
D_k & \, =\,  \eta_{{k}}\big(\vert \nabla u\vert \big)\,D\,,\,\quad \text{with}\quad D\,=\,\vert \nabla u \vert^{-1}\,\Ric(\na u,\na u) \, .\nonumber
\end{align}
Now, since the functions $D_k$ satisfy the inequality
$$
\vert D_k \vert
\,\, \leq \,\,\vert \nabla u \vert \, \vert\Ric\vert \,\in L^{1}_{loc}(M_o) \,,
$$
applying Lebesgue's dominated convergence theorem yields $D\in L^{1}(E_{s}^{S})$ and 
\begin{equation*}
\lim_{k\to + \infty} \,\,\int\limits_{E_s^S}\! D_k \, d\mu\,=  \,\int\limits_{E_s^S} \!D \,   d \mu\, < +\infty \,.
\end{equation*}
This fact, combined with inequality~\eqref{tfci}, implies that the sequence of the integrals of the functions $P_k$ is uniformly bounded in $k$. On the other hand, the $P_k$'s are clearly nonnegative, and they converge monotonically and pointwise 
to the function $P$, outside the set of the critical points of $u$. 
Thus, the monotone convergence theorem yields 
\begin{equation*}
\lim_{k\to + \infty} \,\,\int\limits_{E_s^S}\!P_k \, d\mu\, =  \int\limits_{E_s^S} \!P\, d \mu  \, < +\infty\,.
\end{equation*}
In particular, we have that $P\in L^{1}(E_{s}^{S})$.
Since $\mathrm{div}(Y)=P+D$, it follows then that $\mathrm{div}(Y)\in L^{1}_{loc}(M_o)$, as desired.

Having the claim~\eqref{eq:claim} at hand, we are now ready to prove that $\widetilde{F} \in W^{1,1}_{loc}(-\infty,1)$ with weak derivative given by
\begin{align}\label{Phi'}
\widetilde{F}'(s)&\,=\int\limits_{\{u=s\}}\!\!\! \vert \nabla u \vert^{-1}\, \mathrm{div}(Y)\,d\mathcal{H}^2
\end{align}
a.e. in $(-\infty,1)$. First, we observe that the latter belongs to $L^1_{loc}(-\infty,1)$, thanks to the coarea formula, along with the facts that $ \vert \nabla u \vert^{-1}\,|D|\leq \vert\Ric\vert \in L^{1}_{loc}(M_o)$ and $P\in L^{1}_{loc}(M_o)$. 
Let us consider a test function $\chi\in C_{c}^{\infty}(-\infty,1)$. We have 
\begin{align}
\int\limits_{-\infty}^{1}\!\!\chi'(\tau)\,\widetilde{F}(\tau)\,d\tau&\,=\,\int\limits_{-\infty}^{1}d\tau\int\limits_{\{u=s\}}\!\!\!\chi'(u)\,\,\bigg\langle Y , \, \frac{\nabla u}{|\nabla u|}\bigg\rangle \,d\mathcal{H}^2\,=\,\int\limits_{M_{o}}\! \Big\langle Y,\,\nabla \chi (u)\Big\rangle\,d\mu\\
&\,=\,\lim_{k\to +\infty}\,\int\limits_{M_{o}}\!\Big\langle Y_k,\,\nabla \chi (u)\Big\rangle\,d\mu\,=\,-\,\lim_{k\to +\infty}\,\int\limits_{M_{o}}\!\chi(u)\,\mathrm{div}( Y_{k})\,d\mu\nonumber\,,
\end{align}
where the second equality follows by the coarea formula, the third one by Lebesgue's dominated convergence theorem, whereas the last one is a simple integration by parts. Let us put $-\infty<s<S<1$ such that $\mathrm{supp}\chi \subset (s,S)$ and $s,S$ are regular values of $u$. 
By identity~\eqref{eq2}, it follows that
\begin{align}
\int\limits_{M_{o}}\!\chi(u)\,\mathrm{div}( Y_{k})\,d\mu&\,=\int\limits_{E_{s}^{S}}\!\chi(u)\,\Big[P_{k}\,+\,D_{k}\,+\,\eta_{{k}}'\big( \vert \nabla u\vert\big)\,\vert\,\nabla\vert \nabla u\vert\,\vert^{2} \Big]\,d\mu\,.\nonumber
\end{align}
The standard identity 
\begin{equation}\label{MarFarVal}
\vert \nabla du\vert^{2}-\vert\,\nabla\vert \nabla u\vert\,\vert^{2}=\vert \nabla u\vert^{2} \vert \mathrm{h}\vert^{2}+\vert\,\nabla^{\top}\vert \nabla u\vert\,\vert^{2}=\vert \nabla u\vert^{2} \vert \ringg{\mathrm{h}}\vert^{2}+\vert\,\nabla^{\top}\vert \nabla u\vert\,\vert^{2}+(\mathrm{H}^2/2)\,,
\end{equation}
along with the fact that $\vert \nabla u\vert^{2}\,\mathrm{H}^2=\vert\,\nabla^{\perp}\vert \nabla u\vert\,\vert^{2}$, yields
$$
\vert \nabla u \vert^{-1}\,\vert\,\nabla\vert \nabla u\vert\,\vert^{2}\,\leq \,3 P \, ,
$$
outside the set of the critical points of $u$,
so that $\vert \nabla u \vert^{-1}\,\vert\,\nabla\vert \nabla u\vert\,\vert^{2}\in L^1_{loc}(M_o)$, whereas $|\na u|\, \eta'_{{k}}\big(\vert \nabla u\vert\big)$ is always bounded. Accordingly, as $\lim_{k \to + \infty}\eta_k'(\tau)=0$ for every $\tau \in (0, + \infty)$, the dominated convergence theorem implies that 
$$
\lim_{k\to +\infty}\,\int\limits_{E_{s}^{S}}\!\chi(u)\,\eta_{{k}}'\big( \vert \nabla u\vert\big)\,\vert\,\nabla\vert \nabla u\vert\,\vert^{2} \,d\mu\,=\,0\,.
$$
In conclusion, we obtain
\begin{align}
\int\limits_{-\infty}^{1}\!\chi'(\tau)\,\widetilde{F}(\tau)\,d\tau&\,=\,-\,\lim_{k\to +\infty}\int\limits_{M_{o}}\!\chi(u)\,\mathrm{div}( Y_{k})\,d\mu\\
&\,=\,-\,\int\limits_{M_{o}} \!\chi(u)\,\mathrm{div}( Y)\,d\mu\,=\,-\,\int\limits_{-\infty}^{1} \!\chi(\tau)\int\limits_{\{u=\tau\}}\! \!\vert \nabla u \vert^{-1}\, \mathrm{div}(Y)\,d\mathcal{H}^2 \,d\tau\,,
\end{align}
where in the last identity we used the coarea formula. It is now clear that $\widetilde{F}\in W^{1,1}_{loc}(-\infty,1)$. 
\end{proof}

\begin{remark}
It is worth pointing out that from the argument in the proof of Theorem \ref{thm_monotonicity} it also follows that the function, given by formula \eqref{eq1}, is of class $C^1$, with locally absolutely continuous first derivative.
\end{remark}

\begin{remark}
The statement of Theorem \eqref{thm_monotonicity} remains valid if $(M,g)$ is a complete, noncompact,  $P^2$-irreducible Riemannian manifold with scalar curvature greater than or equal to $-6$, and if there exists the minimal positive Green function $\mathcal{G}_o$ for $\Delta_g$, with a pole at some point $o \in M$, which vanishes at infinity. We recall that a $P^2$-irreducible manifold is a three-manifold that is irreducible (this means that every sphere bounds a ball) and contains no two-sided $ \mathbb {R} P^{2}$. Notice that this remark is significant if the manifold $M$ is nonorientable.
\end{remark}

The function $F$ is not only nondecreasing on the set $\mathcal{T}$, but we are also able to characterize the manifolds on which it is  constant almost everywhere. This is the content of the following corollary.

\begin{corollary}
\label{positive}
Under the assumptions of Theorem~\ref{thm_monotonicity}, if the function $F$ is constant on the set $\mathcal{T}$ given by equation \eqref{defmathcalT}, then $(M,g)$ is isometric to $(\HH^3, g_{\HH^3})$.
\end{corollary}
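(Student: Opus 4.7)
The plan is to exploit the fact that the formula for $F'(t)$ derived during the proof of Theorem \ref{thm_monotonicity}, see \eqref{monoliscia} and the display just above it, exhibits $F'(t)$ as a sum of non-negative terms. If $F$ is constant on $\mathcal{T}$, then $F'(t)=0$ for every $t \in \mathcal{T}$, forcing every summand to vanish on $\Sigma_t$ for each such $t$. This produces five pointwise conditions on each regular level set: (i) $\int_{\Sigma_t}\mathrm{R}^{\Sigma_t}/2\,d\mathcal{H}^2 = 4\pi$, which by Gauss-Bonnet combined with \cite[Lemma~2.3]{MW24} forces $\Sigma_t$ to be connected and diffeomorphic to $\mathbb{S}^2$; (ii) $\Ro \equiv -6$ on $\Sigma_t$; (iii) $|\nabla u|$ is constant on $\Sigma_t$; (iv) $\Sigma_t$ is totally umbilical, $\ringg{\mathrm{h}} = 0$; and (v) the pointwise identity $\mathrm{H} = 2(2-u)|\nabla u|/[(2-u)^2-1]$ holds on $\Sigma_t$.

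By (iii), $|\nabla u|$ coincides on the dense union of regular level sets with a function $\phi \circ u$, and hence, by continuity, on all of $M \setminus \{o\}$. The harmonic identity \eqref{H} then gives $\mathrm{H} = -\phi'(u)$, and substituting into (v) yields the ODE
\begin{equation*}
\phi'(u) \,=\, -\frac{2(2-u)}{(2-u)^2-1}\,\phi(u),
\end{equation*}
which integrates to $\phi(u) = C\,((2-u)^2 - 1)$ for some $C > 0$. Matching against the pole asymptotics \eqref{eq:asymptotic_pole_order_0}-\eqref{eq:asymptotic_pole_order_1} of $\mathcal{G}_o$ pins down $C = 1$, so $|\nabla u| = (2-u)^2 - 1$ on $M \setminus \{o\}$. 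Since $u < 1$ there by the maximum principle, this quantity is strictly positive, so $u$ has no critical points away from the pole and its level sets form a smooth regular foliation. Parametrizing by arc length $r$ along $\nabla u/|\nabla u|$ starting at $o$, the ODE $du/dr = (2-u)^2-1$ with $u \to -\infty$ as $r \to 0$ integrates to $u = 2 - \coth r$.

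Condition (iv), together with the fact that $u$, $|\nabla u|$ and $\mathrm{H}$ now depend only on $r$, yields $\partial_r g_r = \mathrm{H}(r)\,g_r$ for the induced metrics on the leaves; hence the ambient metric on $M \setminus \{o\}$ has the warped-product form $g = dr^2 + f(r)^2 g_0$, with $g_0$ a fixed metric on $\mathbb{S}^2$ and $2 f'/f = \mathrm{H} = 2\coth r$. Combined with smoothness at the pole (which forces $f \to 0$ as $r \to 0$), this gives $f(r) = \sinh r$. Plugging into the standard warped-product scalar curvature formula yields
\begin{equation*}
\Ro \,=\, \frac{R_{g_0}}{\sinh^2 r} - 4 - 2\coth^2 r \,=\, \frac{R_{g_0} - 2}{\sinh^2 r} - 6.
\end{equation*}
Since $\Ro \equiv -6$ on $M\setminus\{o\}$ (extending (ii) by density and continuity), we conclude $R_{g_0} \equiv 2$. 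Any metric on $\mathbb{S}^2$ of constant Gaussian curvature $1$ is the round metric up to isometry, so $g_0 \cong g_{\mathbb{S}^2}$ and thus $g = dr^2 + \sinh^2 r \cdot g_{\mathbb{S}^2}$, i.e., $(M,g)$ is isometric to $(\HH^3, g_{\HH^3})$.

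The main delicate points will be (a) reducing rigorously to the warped-product structure from the pointwise equality conditions obtained from the vanishing of the non-negative integrands, and (b) handling the regularity at the pole carefully enough to pin down both integration constants ($C = 1$ in the ODE for $\phi$ and the normalization $f(r) = \sinh r$).
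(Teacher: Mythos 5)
Your proposal is correct and follows the same overall strategy as the paper's proof: both extract the five pointwise consequences from the vanishing of each non-negative summand in \eqref{monoliscia}, derive the ODE $f'(u)=-\tfrac{2(2-u)}{(2-u)^2-1}f(u)$ for $f(u)=|\nabla u|$ from the combination of \eqref{H} with the vanishing last summand, pin down $f(u)=(2-u)^2-1$ by the pole asymptotics, and then extract a warped-product structure from total umbilicity of the level sets. The one genuine difference is in how the fiber metric is identified with the round sphere. The paper computes the intrinsic scalar curvature $\mathrm{R}^{\Sigma_t}$ via the traced Gauss equation, Bochner's formula, and the identity \eqref{MarFarVal}, arriving at $\mathrm{R}^{\{u=u_0\}}=2[(2-u)^2-1]$ and hence constant sectional curvature of each leaf, then invokes the classification of constant-curvature metrics on $\SSS^2$. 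You instead use the standard warped-product scalar curvature formula in the ambient $3$-manifold together with condition (ii), $\Ro\equiv-6$, which forces $R_{g_0}\equiv 2$. Both routes hinge on the same vanishing conditions; yours is somewhat more algebraic and arguably more elementary once the warped-product form is in hand, while the paper's is a more intrinsic level-set computation that avoids explicitly invoking the warped-product scalar curvature identity.

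One point worth tightening in your write-up, which the paper handles more carefully: the extension of $|\nabla u|=\phi(u)$ from regular level sets to all of $M\setminus\{o\}$ cannot be done purely by ``density and continuity'' from the outset, because the ODE for $\phi$ is only derived on $\mathcal{T}$. The paper proceeds by first restricting to the maximal interval $u^{-1}\big((-\infty,L)\big)$ on which $\nabla u\ne 0$ (so that $(0,T)\subseteq\mathcal{T}$ with $T=\mathrm{arcoth}(2-L)$), derives $f(u)=(2-u)^2-1$ there, and then uses the strict positivity of this expression for $u<1$ to conclude that $L=1$ and hence $\mathrm{Crit}(u)=\emptyset$. Your proposal reaches the same conclusion but should be phrased in this ``continuation'' form to be fully rigorous.
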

\begin{proof}
By Proposition \ref{prop:Green_existence}, we know that the function $|\na u|$ is positive in a sufficiently small punctured neighborhood of the pole $o$, thus, there exists a maximal value $L$ such that $\na u \neq 0$ in $u^{-1}(-\infty,L)$. Let $T=\mathrm{arcoth}(2-L)$. We notice that $\{u<2-\coth t\}$ is connected and $(0,T)\subseteq \mathcal{T}$. Since the function $F$ is constant in $\mathcal{T}$, one easily gets that $F'\equiv 0$ in $(0,T)$, so that all the positive summands in formula~\eqref{monoliscia} are forced to vanish for every $t \in (0,T)$. This fact has very strong implications. First of all, $\na^{\Sigma_t}|\na u| \equiv 0$ implies that $|\na u| = f (u)$, for some positive function $f:(0,T) \to (0 , + \infty)$. Such a function can be made explicit. Indeed, from~\eqref{monoliscia} one also has that $\HHH = \frac{2(2-u) f(u)}{(2-u)^2-1}$. On the other hand, from~\eqref{H} it follows that $\HHH  = -f'(u)$. All in all, we have that $f$ obeys the ODE 
$$
f'(u)\,=\,- \frac{2(2-u) }{(2-u)^2-1}\,f(u)\, .
$$
Now, the only solution to this ODE which is compatible with the asymptotic behavior of $u$ and $|\na u|$ near the pole, is given by 
$f(u) = (2-u)^2-1$. Since $u<1$ on the whole manifold, $f$ never vanishes, so that $T= +\infty$ and $|\na u| \neq 0$ everywhere.
In particular, all the level sets of $u$ are regular and diffeomorphic to each other. More precisely, by the vanishing of the Gauss-Bonnet term in~\eqref{monoliscia}, they are all diffeomorphic to a $2$-sphere and $M$ is diffeomorphic to $\R^3$. Accordingly, we have that the metric $g$ can be written on all $M \setminus \{o\}$ as 
$$
g \, = \, \frac{du  \otimes du}{\big[(2-u)^2-1\big]^2} \, +
 \, g_{\alpha \beta}
(u, \!\vartheta) \, d\vartheta^{\alpha}  \otimes d\vartheta^{\beta}\, ,
$$ 
where $g_{\alpha \beta}(u, \!\vartheta) \, d\vartheta^{\alpha}  \otimes d\vartheta^{\beta}$ represents the metric induced by $g$ on the level sets of $u$. By exploiting the vanishing of the traceless second fundamental form of the level sets in~\eqref{monoliscia}, the coefficients $g_{\alpha \beta}(u,\!\vartheta)$ obey the following first order system of PDE's
$$
\frac{\pa g_{\alpha \beta}}{\pa u}  \, = \, \frac{2(2-u) }{(2-u)^2-1}\, g_{\alpha \beta} \,  \, ,
$$
from which one can deduce
$$
g_{\alpha\beta}(u, \!\vartheta) \, d\vartheta^\alpha \otimes d\vartheta^\beta\,=\,\big[(2-u)^2-1\big]^{-1} c_{\alpha\beta}(\vartheta)\, d\vartheta^\alpha \otimes d\vartheta^\beta\,.
$$
At the same time, the traced Gauss equation together with Bochner's formula and the first identity in \eqref{MarFarVal} imply
\begin{align}
\mathrm{R}^{\{u=u_{0}\}}&=\,\mathrm{R}-2\mathrm{Ric}(\nu,\nu)-\vertl \mathrm{h}\vertr^{2}+\HHH^{2}\\
&=\,-6-2\,\vertl \nabla u\vertr^{-2}\,\mathrm{Ric}(\na u,\na u)+ ( \HHH^2/2 )\\
&=\,-6+\vertl \nabla u\vertr^{-2}\,\left[ -\Delta\vertl \nabla u\vertr^{2} +2\,\vertl \nabla du \vertr^{2}\right]+( \HHH^2/2 ) \\
&=\,-6-\vertl \nabla u\vertr^{-2}\,\Delta\vertl \nabla u\vertr^{2} +(7\,\HHH^{2}/2)\\
&=\,2\,[(2-u)^2-1]\,,
\end{align}
where we took into account that all the nonnegative summands in formula~\eqref{monoliscia} vanish on each level set of $u$ and we used the identities $\vertl \nabla u\vertr=(2-u)^{2}-1$ and $\HHH=2\,(2-u)$.
Thus, $\{u=u_{0}\}$ with the induced metric has constant sectional curvature $[(2-u)^2-1]$ and, by the vanishing of the Gauss--Bonnet term in formula~\eqref{monoliscia}, it is diffeomorphic to a $2$--sphere. 
Consequently, $(\{u=u_{0}\},g_{\{u=u_{0}\}})$ is isometric to $(\SSS^{2},[(2-u)^2-1]^{-1}g_{\mathbb{S}^2})$ by~\cite[Section~3.F]{gahula}, and, up to an isometry, one has $M\setminus \{o\}=(-\infty,1)\times \SSS^{2}$ and 
$$
g=\frac{du \otimes du}{[(2-u)^2-1]^2} \, + \frac{g_{\mathbb{S}^2}}{[(2-u)^2-1]}\,.
$$
Then, $(M\setminus \{o\},g)$ is isometric to $((0,+\infty)\times \SSS^{2},\,dr\otimes dr+\sinh^{2}r g_{\SSS^{2}})$, since
the map 
$$
(u,\vartheta)\in\left((-\infty,1)\times \SSS^{2},g\right) \mapsto \left(\mathrm{arcoth}(2-u),\vartheta\right)\in\left( (0,+\infty)\times \SSS^{2},dr\otimes dr+\sinh^{2}r g_{\SSS^{2}}\right)$$
an isometry. 
The rest of the claim then follows observing that the manifold $(M,g)$ is complete, simply connected and with constant sectional curvature $-1$ (see~\cite[Section~3.F]{gahula}, for instance).
\end{proof}

%%%%%%%%%%%%%%%%%%%%%%%%%%%%%%%%%%%%%%%%%%%%%
%%%%%%%%%%%%%%%%%%%%%%%%%%%%%%%%%%%%%%%%%%%%%

\section{A positive mass theorem in $3$D}\label{sectpositive mass theorem}

In light of the monotonicity result obtained in Theorem \ref{thm_monotonicity}, we present in this section a new positive mass theorem, Theorem \ref{thm:PMT}, for the volume-renormalized mass $m_{VR}$ on three-dimensional asymptotically hyperboloidal  manifolds. However, this result is not a direct consequence of Theorem \ref{thm_monotonicity}, due to the fact that, in order to bound from above the limit of our monotone function by a positive multiple of the mass $m_{VR}$, we require that the Green function $\mathcal{G}_o$ admits a suitable asymptotic expansion with respect to a given asymptotically hyperboloidal map $\varphi$ of order $\delta>1$. Therefore, we will first establish the positive mass inequality in a class of Riemannian manifolds strictly smaller than that of Theorem \ref{thm:PMT} and then extend the inequality to the general case by means of a density argument.

\begin{theorem}\label{thm:PMT_0version}
Let $(M,g)$ be an orientable three-dimensional asymptotically hyperboloidal manifold such that the scalar curvature satisfies $\Ro\geq -6$ and the second integral homology $H_2(M;\Z)$ does not contain any spherical classes. 
We assume that there exists the minimal positive Green function $\mathcal{G}_o$ for $\Delta_g$ with a pole at some point $o \in M$, and a distinguished asymptotically hyperboloidal map $\varphi$ of order $\delta>1$ such that 
\begin{equation}\label{estgreenfunct}
\mathcal{G}_o\,=\,\phi \,\mathcal{G}^{b}+O_2(e^{-3r}) \,,
\end{equation}
where $\varphi_* \phi(r,\xi)$ is simply a smooth positive function of $\xi\in \SSS^2$ and $\mathcal{G}^{b}$ denotes the function $(4\pi)^{-1}\big(\coth r-1\big)$.
Then,
\begin{align*}
m_{VR}(g)\geq0\,.
\end{align*}
\end{theorem}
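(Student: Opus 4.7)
The plan is to combine the monotonicity of $F$ from Theorem \ref{thm_monotonicity} with an explicit computation of its boundary values. Since $F$ admits a locally absolutely continuous, nondecreasing representative on $(0,+\infty)$ by the proof of Theorem \ref{thm_monotonicity}, it suffices to show
\begin{equation}
\lim_{t\to 0^+}F(t)\,=\,0 \qquad \text{and} \qquad \lim_{t\to +\infty} F(t)\,\leq\,\tfrac{1}{2}\, m_{VR}(g).
\end{equation}
Monotonicity then yields $0\leq \tfrac12 m_{VR}(g)$, as asserted.

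\textbf{Limit at the pole.} As $t\to 0^+$, $2-\coth t\to -\infty$, so the level set $\Sigma_t$ concentrates at $o$. Invoking the pole asymptotics \eqref{eq:asymptotic_pole_order_0}--\eqref{eq:asymptotic_pole_order_2} (with $n=2$, $\omega_2=4\pi$), one has $u\sim -r^{-1}$, $|\nabla u|\sim r^{-2}$ near $o$, and $\Sigma_t$ is an almost Euclidean geodesic sphere of radius $r_t\sim t$ with mean curvature $\mathrm{H}\sim 2/r_t$. Substituting these expansions into the five terms of \eqref{eq0}, a direct estimate shows that each is $O(t)$ as $t\to 0^+$, and so the limit vanishes.

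\textbf{Limit at infinity.} This is the technical core. Push the computation to $\HH^3$ via $\varphi$, and introduce the hyperbolic model function $u^{(b)}:=1-4\pi\mathcal{G}^b=2-\coth r$. Then \eqref{estgreenfunct} yields $\varphi_*u=u^{(b)}+\tilde{u}$ with $\tilde{u}=2(1-\phi)e^{-2r}+O_2(e^{-3r})$, while \eqref{decaymetricatinfinity} provides $\varphi_*g-\wh{g}=O(e^{-\delta r})$ with $\delta>1$. As a sanity check, for the exact hyperbolic model (i.e.\ $\phi\equiv 1$, $g=g_{hyp}$) a direct computation using $|\nabla u^{(b)}|^2=1/\sinh^4 r$, $(2-u^{(b)})^2-1=1/\sinh^2 r$, $\mathrm{H}_{\partial B_t}=2\coth t$, and $\mathrm{Vol}_{\wh{g}}(B_t)=2\pi(\sinh t\cosh t-t)$ shows that the five contributions in \eqref{eq0} conspire to give $F\equiv 0$; this both motivates the specific form of the monotone quantity and serves as the leading-order benchmark. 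In the general case, we would: (i) replace each integral over $\Sigma_t$ and $\Omega_t$ by the analogous integral over $S_t=\varphi(\Sigma_t)$ and $D_t$, and subsequently by integrals over the hyperbolic sphere $\partial B_t$ and ball $B_t$, with errors controlled by $\tilde{u}$ and $\varphi_*g-\wh{g}$; (ii) expand each integrand to finite order in $e^{-r}$; (iii) collect the surviving terms to extract a boundary contribution matching $\int_{\partial B_t}\langle\mathrm{div}_{\wh{g}}(\varphi_*g)-d\mathrm{tr}_{\wh{g}}(\varphi_*g),\nu_{\wh{g}}\rangle\,d\mathcal{H}^2$ and a bulk contribution matching $4(\mathrm{Vol}_g(M_t)-\mathrm{Vol}_{\wh{g}}(B_t))$, which together are precisely the two ingredients defining $m_{VR}(g)$.

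\textbf{Main obstacle.} Step (iii) is the delicate part: each of the five terms of $F(t)$ separately produces divergences of order $e^{2t}$ and $e^t$, which must cancel thanks to the hyperbolic model identity. Extracting the surviving $O(1)$ remainder as precisely $\tfrac12 m_{VR}(g)$, modulo a nonnegative defect accounting for the strict inequality, requires careful integration by parts on $(B_t,g_{hyp})$ combined with linearizations of the scalar curvature and volume form around $\wh{g}$. The hypothesis $\Ro\geq -6$ enters both through Theorem \ref{thm_monotonicity} (guaranteeing monotonicity) and through the sign control of the nonnegative remainder term already visible on the right-hand side of \eqref{monoliscia}.
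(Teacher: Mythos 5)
Your high-level strategy --- monotonicity of $F$, $\lim_{t\to 0^+}F(t)=0$, and $\lim_{t\to+\infty}F(t)\leq\tfrac12 m_{VR}(g)$ --- is indeed the skeleton of the paper's argument, and your treatment of the limit at the pole is essentially correct. However, your sketch of the limit at infinity contains a genuine gap. You propose to replace each integral over $S_t=\varphi(\Sigma_t)$ by the analogous integral over the geodesic sphere $\partial B_t$ and then linearize around the hyperbolic model, but there is no a priori control on how far $S_t$ deviates from a round sphere: the level sets of the Green function of a perturbed metric need not be asymptotically round, and the $O(e^{2t})$, $O(e^t)$ divergences you identify do not cancel by the ``hyperbolic model identity'' alone once $S_t$ is not a geodesic sphere. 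The sanity check $F\equiv 0$ for $(\Hyp^3,g_{hyp})$ is correct but does not survive the passage to a general level set, and without a further geometric input the replacement $S_t\rightsquigarrow\partial B_t$ fails.

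The paper's proof circumvents this with two devices you do not mention. First, rather than computing $\lim F(t)$ directly, it bounds $F(t)\leq Q(t)$ by discarding two manifestly nonnegative bulk terms, and then applies l'H\^opital to $\sinh^{-3}t\,F(t)$, which replaces the limit of $F$ by $\limsup\big(F-\tfrac{\sinh t}{3\cosh t}F'\big)$; nonnegativity of each summand of $F'$ in \eqref{monoliscia} allows one to drop everything except the one square term which, combined with the rewrite \eqref{eq21}, produces $4\pi t+2\mathrm{Vol}(\Omega_t)-\tfrac14\tfrac{\sinh t}{\cosh t}\int_{\Sigma_t}\mathrm{H}^2$. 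Second, this quantity is split as $Q_1+Q_2$ (formulas \eqref{defQ1}, \eqref{defQ2}): $Q_1$ is an intrinsically hyperbolic functional of the nonround surface $S_t\subset\Hyp^3$, and $Q_2$ carries the metric perturbation. The crucial bound $\limsup Q_1\leq 0$ is where geometry enters: it uses the conformally invariant Willmore inequality $\tfrac14\int_\Sigma\mathrm{H}^2_{hyp}\,d\mathcal{H}^2_{hyp}\geq 4\pi+\mathrm{Area}_{hyp}(\Sigma)$ valid for \emph{arbitrary} closed surfaces, together with the hyperbolic isoperimetric inequality and the asymptotics $R_t=t+o(1)$. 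Only $Q_2$ is linearized, via a boundary identity involving $\mathrm{div}_b(g)-d\,\mathrm{tr}_b(g)$, to recover $\tfrac12 m_{VR}(g)$ in the limit. You would need to supply the $F\leq Q$ reduction, the l'H\^opital step, the $Q_1/Q_2$ decomposition, and the Willmore and isoperimetric inequalities to turn your sketch into a proof.
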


\begin{proof}
We divide the proof in several steps.

\smallskip

\textbf{Step 1:} {\em The function $F(t)$, defined by expression \eqref{eq0}, converges to zero, as $t\to 0^+$.}

To see this fact, we recall that $u$ is related to the minimal positive  Green's function $\mathcal{G}_o$ of $(M,g)$ with pole at $o$ through the formula $u=1-4\pi\mathcal{G}_{o}$. 
Consequently, there holds
\begin{equation}\label{eq17}
\int\limits_{\Sigma_t}\vert \nabla u\vert \, d\mathcal{H}^2 \, =  \, 4\pi\,
\end{equation}
for every $t\in \mathcal{T}$.
On the other hand,  by the asymptotic behavior of $\mathcal{G}_o$ near the pole (see, for instance, \cite[Appendix]{MarRigSet}), in a sufficiently small punctured neighborhood of $o$, 
the function $u$ satisfies the bounds 
\begin{equation}\label{imp.bounds}
\frac{C_1}r\leq 1-u\leq\frac{C_2}r,
\qquad
\frac{C_3}{r^2}\leq\vert \nabla u \vert \leq \frac{C_4}{r^2},
\qquad
\vert \nabla d u \vert\leq \frac{C_5}{r^3},
\end{equation}
for some positive constants $C_{i}>0$, $i=1, \ldots, 5$, where $r$ denotes the distance to the pole $o$. Combining these bounds, we observe 
\begin{align}
\frac{\vert \nabla u \vert}{(2-u)^2-1} &\leq \frac{C_4}{r^2} \frac{1}{(1+\frac{C_1}r)^2-1}=\frac{C_4}{C_1^2(1+\frac{2r}{C_1})}\leq \frac{C_4}{C_1^2}\,,\\
\frac{\vert \nabla d u \vert}{ [(2-u)^2-1]^{1/2}\,\vert \nabla u\vert }&\leq \frac{C_5}{r^3}\,\frac{r^2}{C_3}\,\frac{1}{[(1+\frac{C_1}r)^2-1]^{1/2}}\leq \frac{C_5 }{C_1C_3}\,,
\end{align}
therefore, we conclude that 
\begin{align}
&\sinh^{2}t \int\limits_{\Sigma_t} \vert \nabla u \vert^{2} \, d\mathcal{H}^2  \,\, = \, \int\limits_{\Sigma_t} \frac{\vert \nabla u \vert}{(2-u)^2-1}\,  \vert \nabla u \vert \,  d\mathcal{H}^2   \,\, \leq \, \frac{4 \pi C_4}{C_1^2}\,,\label{feq1}\\
&\sinh t  \int\limits_{\Sigma_t}\vert \,\mathrm{H}\,\vert\,\vert \nabla u \vert\, \, d\mathcal{H}^2 \, \leq \, \int\limits_{\Sigma_t} \frac{\vert \nabla d u \vert}{ [(2-u)^2-1]^{1/2}\,\vert \nabla u\vert }\,\vert \nabla u\vert \, d\mathcal{H}^2   \,\, \leq \, \frac{4 \pi C_5}{C_1 C_3} \, , \label{feq2}
\end{align}
by equality \eqref{eq17} and inequality \eqref{eq:hdubound}.
Notice indeed that, being $u$ harmonic, every level set $\{u=2-\coth t\}$ is contained in the punctured neighborhood of $o$ in which the bounds \eqref{imp.bounds} hold true, for any $t\in (0,t_0)$ and some $t_0>0$ small enough. \\
Furthermore, estimates \eqref{imp.bounds} also imply that, for such $t\in (0,t_0)$, the sub-level sets $\{u<2-\coth t\}$ are contained in the punctured open ball $B^*_{\frac{C_2}{\coth t -1}}(o)$. Therefore, denoting by $\Omega_t$ the sub-level set $\{u<2-\coth t\}$, we get both
\begin{equation}
\int\limits_{\Omega_t}\frac{\vert \nabla u \vert}{(2-u)^2-1} \,\, d\mu\leq  \frac{C_4}{C_1^2}\,\mu(\{u<2-\coth t\})\leq \frac{C_4}{C_1^2}\,\mu(B_{\frac{C_2}{\coth t -1}}(o))\leq 
\frac{2\pi C_4}{C_1^2}\,\Big(\frac{C_2}{\coth t -1}\Big)^3
\end{equation}
and 
$$\int\limits_{\Omega_t}\frac{\vert \nabla u \vert^3}{[(2-u)^2-1]^3} \,\, d\mu\leq \frac{2\pi C_4^3}{C_1^6}\,\Big(\frac{C_2}{\coth t -1}\Big)^3$$
for any $t>0$ sufficiently small, since $\mu(B_r(o))\leq 2\pi r^3$ for every $r\in (0,r_0)$ and for some $r_0>0$ small enough, as a consequence of the limit
$$\lim_{\substack{r\to 0^+ \\ 0<r<\mathrm{inj}(o)} } \frac{\mu(B_r(o))}{\frac{4}{3}\pi r^3}=1\,.$$
Plugging these estimates into the definition of $F$, it follows then that $F(t)\to 0$, as $t \to 0^+$.

\smallskip

\textbf{Step 2:} {\em The function $F$ is nondecreasing and non-negative on the set $\mathcal{T}$, given in~\eqref{defmathcalT}.}

The claim follows directly from Theorem \ref{thm_monotonicity} together with Step 1.

\smallskip

\textbf{Step 3:} {\em Let $Q(t)$ be the function defined as
\begin{align}\label{defQ}
Q(t)&=\,4\pi t \,+\, \sinh^3t \cosh t \int\limits_{\Sigma_t}\vert \nabla u \vert^{2} \,d\mathcal{H}^2  \, - \,\sinh^2t \int\limits_{\Sigma_t}\vert \nabla u \vert\, \mathrm{H} \, d\mathcal{H}^2\,+\,2\mathrm{Vol}(\Omega_t)\,.
\end{align}
for every $t\in (0,+\infty)$. Then, $F(t)\leq Q(t)$ for all $t\in (0,+\infty)$.}

For every $t\in(0,+\infty)$, we observe that
\begin{align}
3\int\limits_{\Omega_t}&\frac{\vert \nabla u \vert}{(2-u)^2-1} \,\, d\mu\,-\int\limits_{\Omega_t}\frac{\vert \nabla u \vert^3}{[(2-u)^2-1]^3} \,\, d\mu\\
&= \,\,2\mathrm{Vol}(\Omega_t) \,\,-\,\,\int\limits_{\Omega_t}\frac{\vert \nabla u \vert}{[(2-u)^2-1]} \,\bigg(\frac{\vert \nabla u \vert}{[(2-u)^2-1]}-1\bigg)^2\, d\mu\\
&\quad\,\,-\,\,2 \int\limits_{\Omega_t}\bigg(\frac{\vert \nabla u \vert}{[(2-u)^2-1]}-1\bigg)^2\, d\mu\,.
\end{align}
Then, by comparing the expressions of the functions $F(t)$ and $Q(t)$, Step $3$ follows.

\smallskip

\textbf{Step 4:} {\em The function $Q(t)$, introduced in Step 3, satisfies
\begin{equation}\label{feq22}
\lim_{t\to+\infty} \,\frac{Q(t)}{\sinh^3 t}=\,0\,.
\end{equation}
}

Setting $\sigma= g-b$ and writing $b=dr\otimes dr+\sinh^2 r\, g_{\SSS^2}$, we obtain by formula \eqref{decaymetricatinfinity} (through direct computations) that
\begin{align}
\sigma_{rr}&=\,O(e^{-\delta r})\quad\quad &&\,\,\,\sigma_{r\alpha}=\, \sigma_{\alpha r}=\,O(e^{(1-\delta) r})\quad\quad &&\,\,\,\sigma_{\alpha\beta }=\,O(e^{(2-\delta) r})\\
{}^g\Gamma_{rr}^r&=\,O(e^{-\delta r})\quad\quad &&{}^g\Gamma_{r\alpha}^r=\,O(e^{(1-\delta)r})\quad\quad &&{}^g\Gamma_{\alpha\beta}^r=\, {}^b\Gamma_{\alpha\beta}^r+O(e^{(2-\delta)r})\\
{}^g\Gamma_{rr}^\lambda&=\,O(e^{-(1+\delta) r})\quad\quad\quad&& {}^g\Gamma_{r\alpha}^\beta=\,{}^b\Gamma_{r\alpha}^\beta+O(e^{-\delta r})\quad\quad \quad && {}^g\Gamma_{\alpha\beta}^\lambda=\,{}^b\Gamma_{\alpha\beta}^\lambda +O(e^{(1-\delta) r})
\end{align}
(by convention, Greek indices refer to spherical coordinates and vary in the set $\{1,2\}$).
Recalling the link $u=1-4\pi\mathcal{G}_{o}$ between $u$ and $\mathcal{G}_{o}$, we find among the direct consequences of \eqref{estgreenfunct} the existence of some $t_0\in (0,+\infty)$ such that $(t_0,+\infty)\subset \mathcal{T}$, and the identities
\begin{align}
&\frac{|\nabla u|}{(2-u)^2-1}=\,1+O(e^{-r}) \label{usefulestimate1}\\
&\quad\quad\,\,\mathrm{H}=\,2\,\frac{\cosh r}{\sinh r}\,\big(1+O(e^{-r})\big)\,. \label{feqestH}
\end{align}
Notice also that $\Sigma_t$ is connected for every $t\in (t_0,+\infty)$ (by following a similar argument to that in \cite[Remark 2.1]{AMO}).
Putting together equalities \eqref{eq17} and \eqref{usefulestimate1}, we conclude that, unless we pass a bigger $t_0>0$, 
\begin{equation}
0\leq\, \cosh t \int\limits_{\Sigma_t}\vert \nabla u \vert^{2} \,d\mathcal{H}^2=\,\frac{\cosh t}{\sinh^2 t} \int\limits_{\Sigma_t} \frac{\vert \nabla u \vert^{2}}{(2-u)^2-1} \,d\mathcal{H}^2\leq\, 8\pi \,\frac{\cosh t}{\sinh^2 t}\,,
\end{equation}
for all $t\geq t_0$. At the same time, the combination of equalities \eqref{eq17} and \eqref{feqestH} implies that, unless we choose a bigger $t_0>0$, 
\begin{equation}
\bigg\vert\,\int\limits_{\Sigma_t}\vert \nabla u \vert\, \mathrm{H} \, d\mathcal{H}^2\,\bigg\vert \leq \int\limits_{\Sigma_t}\vert \nabla u \vert\, |\mathrm{H}| \, d\mathcal{H}^2\leq\, 16\pi\,,
\end{equation}
for all $t\geq t_0$. Thus,
\begin{equation}
\lim_{t\to+\infty} \,\sinh^{\!-3} t\,\Bigg[4\pi t \,+\, \sinh^3t \cosh t \int\limits_{\Sigma_t}\vert \nabla u \vert^{2} \,d\mathcal{H}^2  \, - \,\sinh^2t \int\limits_{\Sigma_t}\vert \nabla u \vert\, \mathrm{H} \, d\mathcal{H}^2\Bigg]=\,0\,.
\end{equation}
Lastly, by applying l'Hospital's rule and using estimate \eqref{usefulestimate1} once more, we obtain
\begin{align*}
\lim_{t\to +\infty}  \frac{\mathrm{Vol}(\Omega_t)}{\sinh^3 t}&=\,\lim_{t\to +\infty}\,\frac{ \sinh^{\!-2} t \int\limits_{\Sigma_t}|\nabla u|^{-1} \,d\mathcal{H}^2}{2\sinh^2 t \cosh t}\\
&=\,\lim_{t\to +\infty}\,\frac{1}{\cosh t}\int\limits_{\Sigma_t}\bigg(\frac{(2-u)^2-1}{|\nabla u|}\bigg)^{\!2}\,|\nabla u| \,d\mathcal{H}^2 =\,0\,.
\end{align*}

\smallskip

\textbf{Step 5:} {\em There holds that
\begin{equation}\label{firstupperboundlimit}
\lim_{t\to +\infty}F(t)\leq \limsup_{t\to +\infty} \Bigg(4\pi t \,+\,2\mathrm{Vol}(\Omega_t) \,-\,\frac{1}{4}\,\frac{\sinh t}{\cosh t}\int\limits_{\Sigma_t}\mathrm{H}^2 \,d\mathcal{H}^2\Bigg)\,.
\end{equation}
Hence, defining
\begin{align}
Q_1(t)&=4\pi t \,+\,2\mathrm{Vol}_{hyp}(D_t) \,-\,\frac{1}{4}\,\frac{\sinh t}{\cosh t} \int\limits_{\Sigma_t}\mathrm{H}^2_b \,d\mathcal{H}^2_b\,,\label{defQ1}\\
Q_2(t)&=\,2\Big(\mathrm{Vol}(\Omega_t)-\mathrm{Vol}_{hyp}(D_t)\Big)\,-\,\frac{1}{4}\,\frac{\sinh t}{\cosh t}\Bigg(\, \int\limits_{\Sigma_t}\mathrm{H}^2 \,d\mathcal{H}^2-\int\limits_{\Sigma_t}\mathrm{H}^2_b \,d\mathcal{H}^2_b\Bigg)\,,\label{defQ2}
\end{align}
for any $t\in (t_0,+\infty)$ and some $t_0\in (0,+\infty)$ sufficiently big, we have
\begin{equation}
\lim_{t\to +\infty}F(t)\leq \,\limsup_{t\to +\infty} \Big(Q_1(t)\,+\,Q_2(t)\Big)\,.\label{ffeq60}
\end{equation}
Here, $D_t$ is the compact domain in $\HH^3$ having $S_t=\{\varphi_*u=2-\coth t\}$ as a boundary.
}

We start by recalling that in the Step 4 we saw the existence of some $t_0\in (0,+\infty)$ such that $(t_0,+\infty)\subset \mathcal{T}$.
Next, let us rewrite the function $Q(t)$ in the following way
\begin{align}
Q(t)&=\,4\pi t \,+\,2\mathrm{Vol}(\Omega_t) \,-\,\frac{1}{4}\,\frac{\sinh t}{\cosh t}\int\limits_{\Sigma_t} \mathrm{H}^2 \,d\mathcal{H}^2\\
&\quad +\,\frac{1}{4}\,\frac{\sinh t}{\cosh t}\int\limits_{\Sigma_t}\bigg(\,\mathrm{H}-\frac{2(2-u)}{(2-u)^2-1} \,\vert \nabla u \vert\bigg)^{\!2}d\mathcal{H}^2\,. \label{eq21}
\end{align}
Lastly, we notice that
$$\lim_{t\to+\infty} \frac{F(t)}{\sinh^3 t}=0\,,$$
as a consequence of Step 3 and Step 4.
Thus, we can apply l'Hospital's rule \cite[Theorem II]{taylor1} and have
\begin{align}
\lim_{t\to +\infty}F(t)&=\,\lim_{t\to +\infty} \frac{ \sinh^{\!-3} t \, F(t)}{ \sinh^{\!-3} t}\\
&\leq\,\limsup_{t\to +\infty} \Big(-\,\frac{\sinh t}{3\cosh t}\, F'(t)\,+\, F(t)\Big)\\
&\leq \,\limsup_{t\to +\infty} \Bigg[-\,\frac{1}{4}\,\frac{\sinh t}{\cosh t}\, \int\limits_{\Sigma_t}\!\bigg(\,\mathrm{H}-\frac{2(2-u)}{(2-u)^2-1} \,\vert \nabla u \vert\bigg)^{\!2}d\mathcal{H}^2\,+\, Q(t)\Bigg]\\
&\leq \,\limsup_{t\to +\infty} \Bigg(4\pi t \,+\,2\mathrm{Vol}(\Omega_t) \,-\,\frac{1}{4}\,\frac{\sinh t}{\cosh t}\int\limits_{\Sigma_t}\mathrm{H}^2 \,d\mathcal{H}^2\Bigg)\,,
\end{align}
 where the second inequality is a consequence of the expression of $F'(t)$, given in formula \eqref{monoliscia}, as sum of nonnegative terms, and the third inequality of equality \eqref{eq21}.

\smallskip

\textbf{Step 6:} {\em The function $Q_1(t)$, defined by expression \eqref{defQ1} on the interval $(t_0,+\infty)$, satisfies
\begin{equation}\label{upperboundoflimsupofQ1}
 \limsup_{t\to +\infty} \,Q_1(t)\leq\, 0\,.
\end{equation}
}

To obtain the upper limit \eqref{upperboundoflimsupofQ1}, we recall two classical inequalities that hold in the hyperbolic space $(\Hyp^3,g_{hyp})$. The first one is a Willmore-type inequality, which states that
\begin{equation}\label{Wiltypeineq}
\frac{1}{4}\int\limits_{\Sigma}\mathrm{H}^2_{g_{hyp}} \,d\mathcal{H}^2_{g_{hyp}}\geq \,4\pi+\mathrm{Area}_{g_{hyp}}(\Sigma)\,,
\end{equation}
for any closed surface $\Sigma$ in the hyperbolic space. It follows due to the conformal invariance of the Willmore functional observed in \cite{Chen} (for some generalizations, see also \cite{Chai, Rit,Sch}).
The second one is a isoperimetric inequality, which asserts that a geodesic sphere of $(\Hyp^3,g_{hyp})$ has the smallest area among all closed surfaces in $(\Hyp^3,g_{hyp})$ that enclose the same amount of volume, see \cite{Schmidt}.
Furthermore, it is immediate to see the equality
$$
\int\limits_{\Sigma_t} \mathrm{H}^2_b \,d\mathcal{H}^2_b\,=\,\int\limits_{S_t}\mathrm{H}^2_{hyp} \,d\mathcal{H}^2_{hyp}\,,
$$
which, together with inequality \eqref{Wiltypeineq}, implies
\begin{equation}
Q_1(t)\leq\, 4\pi t \,+\,2\mathrm{Vol}_{hyp}(D_t) \,-\,\,\frac{\sinh t}{\cosh t} \Big(4\pi +\mathrm{Area}_{hyp}(S_t)\Big) \,.
\end{equation}
At the same time, applying the isoperimetric inequality, we obtain
$$\mathrm{Area}_{hyp}(S_t)\,\geq\, 4\pi \sinh^2 \! R_t\,, $$
where $R_t$ is the radius of a geodesic ball of $(\Hyp^3,g_{hyp})$ whose volume is equal to $\mathrm{Vol}_{hyp}(D_t)$. More precisely, $R_t$ is defined by the identity
\begin{equation}
\mathrm{Vol}_{hyp}(D_t)\,=\,2\pi \sinh R_t \cosh R_t\,-\,2\pi R_t\,.
\end{equation}
Thus, joining all, we have
\begin{align}
Q_1(t)&\leq\, -\,4\pi\,\frac{\sinh t}{\cosh t}\,+\, 4\pi t \,+\,2 \big( 2\pi \sinh R_t \cosh R_t\,-\,2\pi R_t\big) \,-\,\,4\pi \sinh^2  \!R_t\,\frac{\sinh t}{\cosh t}\\
&=\,2\pi\,\frac{e^{2(R_t-t)}}{1+e^{-2t}}\,+\,4\pi(t-R_t)\,-\,2\pi\,+\,O(e^{-2t})\,+\,O(e^{-2R_t})\,.
\end{align}
The statement then follows once we have shown that $R_t=t+o_t(1)$. With this aim, let us observe that
\begin{align*}
\mathrm{Vol}_{hyp}(D_t)&=\,\mathrm{Vol}_b(\{2-\coth t_0<u<2-\coth t\})\,+\,C_0 \\
&=\int_{t_0}^t d\tau\, \sinh^2 \tau \int\limits_{\Sigma_\tau} \bigg(\frac{(2-u)^2-1}{|\nabla^b u|_b}\bigg)^{\!2}\,|\nabla^b u|_b \,d\mathcal{H}^2_b\,+\,C_0\,,
\end{align*}
for any $t\in (t_0,+\infty)$ and some $t_0>0$ such that $(t_0,+\infty)\subset \mathcal{T}$. Here, $C_0$ is a positive constant independent of $t$ and the second equality is achieved by means of the coarea formula.  \\
Now, the asymptotic behaviors introduced at the beginning of Step 4 and the relation between $u$ and $\mathcal{G}_o$ given by the formula $u=1-4\pi\mathcal{G}_{o}$ lead to the estimates
\begin{align}
|\nabla^b u|_b&=\,\big(1+O(e^{-\delta r}) \big)\,|\nabla u| \\
d\mathcal{H}^2_b&=\,\big(1+O(e^{-\delta r}) \big)\,d\mathcal{H}^2\,,\label{ffeq57}
\end{align}
which, along with equality \eqref{usefulestimate1}, yield
\begin{equation}
\bigg(\frac{(2-u)^2-1}{|\nabla^b u|_b}\bigg)^{\!2}\,|\nabla^b u|_b \,d\mathcal{H}^2_b\,=\,\big(1+O(e^{-r})\big)|\nabla u|\,d\mathcal{H}^2\,.
\end{equation}
Moreover, taking advantage from the fact that the function $\phi$ in \eqref{estgreenfunct} is smooth and positive on $\SSS^2$, we get the existence of two positive constants $c_1$ and $c_2$ such that $c_1t\leq r(p)\leq c_2 t$ for all $p\in \Sigma_t$, for every $t>0$ sufficiently big. Thus, for $t>0$ large enough,
\begin{equation}\label{estimatesonradialcoordinates}
c_1 t\,\leq\, \min\limits_{\Sigma_t}\,r\,\leq\, \max\limits_{\Sigma_t}\,r\,\leq\, c_2 t \,.
\end{equation}
This implies that $B^{hyp}_{c_1 t}\subset D_t\subset B^{hyp}_{c_2 t}$, which in turn gives
\begin{equation}\label{estimatesonRt}
c_1 t\,\leq \,R_t\,\leq\, c_2 t\,.
\end{equation}
Then, joining all these facts with identity \eqref{eq17}, we obtain  
$$ \int\limits_{\Sigma_t}\bigg(\frac{(2-u)^2-1}{|\nabla^b u|_b}\bigg)^{\!2}\,|\nabla^b u|_b \,d\mathcal{H}^2_b\,=\,4\pi\big(1+O(e^{-\varepsilon t}) \big)$$
for some $\varepsilon>0$. Consequently, we have 
\begin{equation}
\mathrm{Vol}_{hyp}(D_t)\,=\,2\pi \sinh t \cosh t\,-\,2\pi t +O(e^{\max\{2-\varepsilon, 0\} t})\,.
\end{equation}
Then, it follows that
\begin{align}
2\pi \sinh R_t \cosh R_t\,-\,2\pi R_t&=\,\big(2\pi \sinh t \cosh t\,-\,2\pi t\big) (1+o_t(1))\\
\frac{\sinh R_t \cosh R_t\,-\,R_t}{ \sinh t\cosh t\,-\, t}&=1+o_t(1)\\
\frac{e^{2R_t}(1-4R_te^{-2R_t}-e^{-4R_t})}{ e^{2t}(1-4te^{-2t}-e^{-4t})}&=1+o_t(1)\,.
\end{align}
By estimates \eqref{estimatesonRt}, we thus conclude that
\begin{align}
\frac{e^{2R_t}(1+o_t(1))}{ e^{2t}(1+o_t(1))}&=1+o_t(1)\\
e^{2(R_t-t)}&=1+o_t(1)\\
R_t-t&=o_t(1)\,.
\end{align}

\smallskip

\textbf{Step 7:} {\em The function $Q_2(t)$, defined by expression \eqref{defQ2} on the interval $(t_0,+\infty)$, satisfies
\begin{equation}\label{upperboundoflimsupofQ2}
 \limsup_{t\to +\infty} Q_2(t)\,=\,\frac{1}{2}\,m_{VR}(g)\,.
\end{equation}
}

In the same spirit as in \cite{AMMO} and following similar computations, we are going to compare the expression of the Willmore functional along the level sets of the function $u$ with
an analogous expression in which the geometric quantities are computed with respect to the background metric $b$. To this end, we observe that
\begin{align}
 \det g&=\,\det b\,\big(1+ \mathrm{tr}_b(\sigma)+O(e^{-2\delta r})\big)\\
\sqrt{\det g}&=\,\sqrt{\det b}\,\big(1+ \frac{1}{2}\mathrm{tr}_b(\sigma)+O(e^{-2\delta r})\big)\\
g^{rr}&=\,b^{rr}-\sigma_{rr}+O(e^{-2\delta r})\\
g^{r\alpha}&=\,b^{r\alpha}-b^{\alpha \beta}\sigma_{\beta r}b^{rr}+O(e^{-(1+2\delta) r})\\
g^{\alpha\beta}&=\,b^{\alpha\beta}-b^{\alpha \gamma}\sigma_{\gamma \lambda}b^{\lambda \beta}+O(e^{-2(1+\delta) r})\,.
 \end{align}
Let $\nu=\nabla u /|\nabla u|$ and $\nu_b=\nabla^b u /|\nabla^b u|_b$. We notice that
\begin{equation}
\nu_b^r=\,1+O(e^{-r})\quad\quad\text{and}\quad\quad \nu_b^\alpha=\,O(e^{-2r})\,.
\end{equation}
Therefore, by direct computations, we get
\begin{align}
\nu&=\,\Big(\nu_b^r+\frac{1}{2} \nu_b^r \sigma(\nu_b,\nu_b)-b^{rr}\sigma_{rr} \nu_b^r-b^{rr}\sigma_{r\beta} \nu_b^\beta+O(e^{-2\delta r})\Big) \partial_r\\
&\quad +\Big(\nu_b^\alpha+\frac{1}{2} \nu_b^\alpha \sigma(\nu_b,\nu_b)-b^{\alpha\beta}\sigma_{\beta r} \nu_b^r-b^{\alpha \gamma}\sigma_{\gamma \lambda} \nu_b^\lambda+O(e^{-(1+2\delta )r})\Big) \partial_\alpha\,,
\end{align}
which implies 
\begin{equation}\label{estimateareaelement}
d\mathcal{H}^2=\,\Big(1+\frac{1}{2} \mathrm{tr}_b(\sigma)-\frac{1}{2}\sigma(\nu_b,\nu_b)+O(e^{-2\delta r})\Big)\,d\mathcal{H}^2_b\,.
\end{equation}
Since $\mathrm{H}=(g^{ij}-\nu^i\nu^j)(\nabla du)_{ij}/|\nabla u|$, we estimate
\begin{align}
g^{rr}-\nu^r\nu^r&=\,\epsilon^{rr}-\epsilon^{ri}\sigma_{ij}\epsilon^{jr}+O(e^{-2\delta r})\\
g^{r\alpha}-\nu^r\nu^\alpha&=\,\epsilon^{r\alpha}-\epsilon^{ri}\sigma_{ij}\epsilon^{j\alpha}+O(e^{-(1+2\delta )r})\\
g^{\alpha\beta}-\nu^\alpha\nu^\beta&=\,\epsilon^{\alpha\beta}-\epsilon^{\alpha i}\sigma_{ij}\epsilon^{j\beta}+O(e^{-2(1+\delta )r})\,,
\end{align}
where $\epsilon^{ij}=b^{ij}-\nu_b^i\nu_b^j$, for all $i,j\in\{r,1,2\}$, and 
\begin{equation}
\epsilon^{rr}=\,O(e^{- r})\quad\quad\quad \epsilon^{r\alpha}=\,O(e^{-2r})\quad\quad\quad \epsilon^{\alpha\beta}=\,b^{\alpha\beta}+O(e^{-4r})\,.
\end{equation}
For simplicity, we continue to denote by $\mathrm{h}^b$ its extension $\nabla^b du/|\nabla^b u|_b$.
Setting $\omega=du /|\nabla^b u|_b$ and $\mathrm{D}_{g,b}\Gamma_{ij}^k={}^g\Gamma_{ij}^k -{}^b\Gamma_{ij}^k $, the Willmore energy integrand then satisfies
\begin{align}
\mathrm{H}^2\,d\mathcal{H}^2&=\,\Big( \mathrm{H}_b^2\,+ \sigma(\nu_b,\nu_b)\mathrm{H}_b^2+\frac{1}{2}\,\epsilon^{ij} \sigma_{ij}\mathrm{H}_b^2 -2\mathrm{H}_b\epsilon^{ij}\mathrm{D}_{g,b}\Gamma_{ij}^k \,\omega_k-2 \mathrm{H}_b\epsilon^{ik}\sigma_{kl}\epsilon^{lj}\mathrm{h}^b_{ij}+O(e^{-2\delta r})\Big)\,d\mathcal{H}^2_b\nonumber\\
&=\,\Big( \mathrm{H}_b^2\,+2 \sigma(\nu_b,\nu_b)\mathrm{H}_b+2\,\epsilon^{ij} \sigma_{ij} -4\epsilon^{ij}\mathrm{D}_{g,b}\Gamma_{ij}^k \,g_{kl}\nu_b^l-4\epsilon^{ik}\sigma_{kl}\epsilon^{lj}\mathrm{h}^b_{ij} \nonumber\\
&\,\,\quad + \sigma(\nu_b,\nu_b)\mathrm{H}_b \big(\mathrm{H}_b-2)+\frac{1}{2}\,\epsilon^{ij} \sigma_{ij}\big(\mathrm{H}_b^2-4\big) -2\big(\mathrm{H}_b-2)\epsilon^{ij}\mathrm{D}_{g,b}\Gamma_{ij}^k \,\omega_k
+4\epsilon^{ij}\mathrm{D}_{g,b}\Gamma_{ij}^k\,\sigma_{kl}\nu_{b}^l \nonumber\\
&\,\,\quad-2 \big(\mathrm{H}_b-2\big)\epsilon^{ik}\sigma_{kl}\epsilon^{lj}\mathrm{h}^b_{ij}+O(e^{-2\delta r})\Big)\,d\mathcal{H}^2_b\nonumber\\
&=\,\Big( \mathrm{H}_b^2\,+2 \sigma(\nu_b,\nu_b)\mathrm{H}_b+2\,\epsilon^{ij} \sigma_{ij} -4\epsilon^{ij}\mathrm{D}_{g,b}\Gamma_{ij}^k \,g_{kl}\nu_b^l-4\epsilon^{ik}\sigma_{kl}\epsilon^{lj}\mathrm{h}^b_{ij}
+O(e^{-(1+\delta) r})\Big)\,d\mathcal{H}^2_b\,, \label{ffeq55}
\end{align}
by virtue of the fact that $\mathrm{H}_b=2+O(e^{-r})$.
Now, we observe that
\begin{equation}
\mathrm{div}^{\top}_b(X^\top)+\sigma(\nu_b,\nu_b)\mathrm{H}_b\,=\,\mathrm{div}^{\top}_b(X)\,=\,\epsilon^{ij}\Big[\partial_i(\sigma_{jk} \nu_b^k)-{}^b\Gamma_{ij}^k\sigma_{kl} \nu_b^l\Big]\,,
\end{equation}
where $X=b^{ij}\sigma_{jk}\nu_b^k \partial_i$, which is the vector field obtained from $\sigma(\nu_b, \cdot)$ by raising an index with respect the metric $b$, and $X^\top=X-b(X, \nu_b)\nu_b$. Thus, there holds
\begin{align}
\sigma(\nu_b,\nu_b)\mathrm{H}_b&=\,\epsilon^{ij}\Big[\partial_i(\sigma_{jk} \nu_b^k)-{}^b\Gamma_{ij}^k\sigma_{kl} \nu_b^l\Big]-\mathrm{div}^{\top}_b(X^\top)\\
&=\,\epsilon^{ij} \Big[\Big(\mathrm{D}_{g,b}\Gamma_{ik}^l g_{jl}+\mathrm{D}_{g,b}\Gamma_{ij}^l g_{kl}\Big)\nu_b^k+\epsilon^{kl}\sigma_{jk}\mathrm{h}^b_{il}\Big]-\mathrm{div}^{\top}_b(X^\top)\,,
\end{align}
which implies 
\begin{align}
& \sigma(\nu_b,\nu_b)\mathrm{H}_b+\epsilon^{ij} \sigma_{ij} -2\epsilon^{ij}\mathrm{D}_{g,b}\Gamma_{ij}^k \,g_{kl}\nu_b^l-2\epsilon^{ik}\sigma_{kl}\epsilon^{lj}\mathrm{h}^b_{ij}\\
&\quad=\,\epsilon^{ij} \mathrm{D}_{g,b}\Gamma_{ik}^l g_{lj}\nu_b^k-\epsilon^{ij}\mathrm{D}_{g,b}\Gamma_{ij}^l \,g_{lk}\nu_b^k+\epsilon^{ij} \sigma_{ij}-\epsilon^{ik}\sigma_{kl}\epsilon^{lj}\mathrm{h}^b_{ij}-\mathrm{div}^{\top}_b(X^\top)\\
&\quad=\,b^{ij}\Big( \mathrm{D}_{g,b}\Gamma_{ik}^l g_{lj}-\mathrm{D}_{g,b}\Gamma_{ij}^l \,g_{lk}\Big)\nu_b^k+\epsilon^{ij} \sigma_{ij}-\epsilon^{ik}\sigma_{kl}\epsilon^{lj}\mathrm{h}^b_{ij}-\mathrm{div}^{\top}_b(X^\top)\\
&\quad=\,b^{ij}\Big( \mathrm{D}_{g,b}\Gamma_{ik}^l g_{lj}-\mathrm{D}_{g,b}\Gamma_{ij}^l \,g_{lk}\Big)\nu_b^k-\mathrm{div}^{\top}_b(X^\top)+O(e^{-(1+\delta) r})\,,\label{ffeq56}
 \end{align}
since we have
\begin{align}
\epsilon^{ij} \sigma_{ij}&=\,\epsilon^{\alpha\beta}\sigma_{\alpha\beta}+O(e^{-(1+\delta) r})\\
\epsilon^{ik}\sigma_{kl}\epsilon^{lj}\mathrm{h}^b_{ij}&=\,\epsilon^{\lambda\beta}\sigma_{\lambda\beta}+O(e^{-(1+\delta) r})\,.
\end{align}
Then, plugging information \eqref{ffeq56} in formula \eqref{ffeq55}, the expression for the Willmore energy integrand becomes
\begin{align} 
\mathrm{H}^2\,d\mathcal{H}^2&=\,\Big[ \mathrm{H}_b^2\,+2 b^{ij}\Big( \mathrm{D}_{g,b}\Gamma_{ik}^l g_{lj}-\mathrm{D}_{g,b}\Gamma_{ij}^l \,g_{lk}\Big)\nu_b^k-2\mathrm{div}^{\top}_b(X^\top)+O(e^{-(1+\delta) r})\Big]\,d\mathcal{H}^2_b\,,\quad\quad \label{estfffeq1}
\end{align}
where there holds
\begin{equation}\label{estfffeq2}
 b^{ij}\Big( \mathrm{D}_{g,b}\Gamma_{ik}^l g_{lj}-\mathrm{D}_{g,b}\Gamma_{ij}^l \,g_{lk}\Big)\nu_b^k\,=\,O(e^{-\delta r})\,.
 \end{equation}
Accordingly, we conclude by the divergence theorem that
\begin{align*}
\int\limits_{\Sigma_t} \mathrm{H}^2 \,d\mathcal{H}^2-\int\limits_{\Sigma_t} \mathrm{H}^2_b \,d\mathcal{H}^2_b
  &=\,\int\limits_{\Sigma_t}\Big[ 2 b^{ij}\Big( \mathrm{D}_{g,b}\Gamma_{ik}^l g_{lj}-\mathrm{D}_{g,b}\Gamma_{ij}^l \,g_{lk}\Big)\nu_b^k-2\mathrm{div}^{\top}_b(X^\top)+O(e^{-(1+\delta) r})\Big]\,d\mathcal{H}^2_b\\
&=\,\int\limits_{\Sigma_t}\Big[ 2 b^{ij}\Big( \mathrm{D}_{g,b}\Gamma_{ik}^l g_{lj}-\mathrm{D}_{g,b}\Gamma_{ij}^l \,g_{lk}\Big)\nu_b^k+O(e^{-(1+\delta) r})\Big]\,d\mathcal{H}^2_b\,.
\end{align*}
Now, we claim that
\begin{equation}\label{ffeq58}
\int\limits_{\Sigma_t}O(e^{-(1+\delta) r})\,d\mathcal{H}^2_b\,=\,O(e^{-\varepsilon t})\,,
\end{equation}
for some $\varepsilon>0$.
A crucial fact is that $\delta>1$. Indeed, it is convenient to multiply and divide the integrand for $|\nabla u|$, as 
\begin{equation}
|\nabla u|\,=\,\frac{\phi}{\sinh^2 r}\big(1+O(e^{-r})\big)\,.\label{normofgradientuestimate}
\end{equation}
Hence, by equality \eqref{ffeq57}, we can write the integrad in \eqref{ffeq58} as
$$O(e^{-(1+\delta) r})\,d\mathcal{H}^2_b\,=\,O(e^{-(\delta-1) r})\,|\nabla u|\,d\mathcal{H}^2_b\,=\,O(e^{-(\delta-1) r})\,|\nabla u|\,d\mathcal{H}^2\,,$$
which, together with equality \eqref{eq17} and the estimates \eqref{estimatesonradialcoordinates}, leads to the desired claim.\\
By combining these recent results and recalling expression \eqref{defQ2} of the function $Q_2$, we have
\begin{align}
&\frac{1}{4}\,\frac{\sinh t}{\cosh t}\,\Bigg(\,\int\limits_{\Sigma_t} \mathrm{H}^2 \,d\mathcal{H}^2\,-\,\int\limits_{\Sigma_t} \mathrm{H}^2_b \,d\mathcal{H}^2_b\Bigg)\\
&\quad \,\, =\,\frac{1}{2}\,\big(1+O(e^{-2t})\big)\,\Bigg(\,\int\limits_{\Sigma_t} b^{ij}\Big( \mathrm{D}_{g,b}\Gamma_{ik}^l g_{lj}-\mathrm{D}_{g,b}\Gamma_{ij}^l \,g_{lk}\Big)\nu_b^k\,d\mathcal{H}^2_b+O(e^{-\varepsilon t})\Bigg)\,.
\end{align}
Let us show that
\begin{equation}
O(e^{-2t}) \int\limits_{\Sigma_t}b^{ij}\Big( \mathrm{D}_{g,b}\Gamma_{ik}^l g_{lj}-\mathrm{D}_{g,b}\Gamma_{ij}^l \,g_{lk}\Big)\nu_b^k\,d\mathcal{H}^2_b \,=\,O(e^{-\varepsilon t})\,,
\end{equation}
unless we pass a smaller $\varepsilon>0$. In this case, it is convenient to rewrite the expression as
\begin{align}
&O(e^{-2t})\int\limits_{\Sigma_t} b^{ij}\Big( \mathrm{D}_{g,b}\Gamma_{ik}^l g_{lj}-\mathrm{D}_{g,b}\Gamma_{ij}^l \,g_{lk}\Big)\nu_b^k\,d\mathcal{H}^2_b \\
&\quad\,\,=\,O(e^{-2t})\sinh^2 t\int\limits_{\Sigma_t}\frac{(2-u)^2-1}{|\nabla u|}\,\Big[b^{ij}\Big( \mathrm{D}_{g,b}\Gamma_{ik}^l g_{lj}-\mathrm{D}_{g,b}\Gamma_{ij}^l \,g_{lk}\Big)\nu_b^k\Big] \,|\nabla u|\,d\mathcal{H}^2_b\,,
\end{align}
therefore, the estimates \eqref{usefulestimate1} and \eqref{estfffeq2}, together with \eqref{ffeq57}, lead to
\begin{align}
O(e^{-2t})\int\limits_{\Sigma_t} b^{ij}\Big( \mathrm{D}_{g,b}\Gamma_{ik}^l g_{lj}-\mathrm{D}_{g,b}\Gamma_{ij}^l \,g_{lk}\Big)\nu_b^k\,d\mathcal{H}^2_b &=\,O(1) \int\limits_{\Sigma_t} O(e^{-\delta r}) \,|\nabla u|\,d\mathcal{H}^2_b\\
&=\,O(1) \int\limits_{\Sigma_t} O(e^{-\delta r}) \,|\nabla u|\,d\mathcal{H}^2\,.
\end{align}
The wanted statement then follows from equality \eqref{eq17} with estimate \eqref{estimatesonradialcoordinates}, and we conclude 
\begin{align}
\frac{1}{4}\,\frac{\sinh t}{\cosh t}\,\Bigg(\,\int\limits_{\Sigma_t} \mathrm{H}^2 \,d\mathcal{H}^2\,-\,\int\limits_{\Sigma_t}\mathrm{H}^2_b \,d\mathcal{H}^2_b\Bigg)& =\,\frac{1}{2} \,\int\limits_{\Sigma_t} b^{ij}\Big( \mathrm{D}_{g,b}\Gamma_{ik}^l g_{lj}-\mathrm{D}_{g,b}\Gamma_{ij}^l \,g_{lk}\Big)\nu_b^k\,d\mathcal{H}^2_b+O(e^{-\varepsilon t})\\
&=-\,\frac{1}{2} \,\int\limits_{\Sigma_t} (\mathrm{div}_b(g)-d\mathrm{tr}_b(g)) (\nu_b)\,d\mathcal{H}^2_b+O(e^{-\varepsilon t})\,.
\end{align}
This implies that
\begin{equation}
Q_2(t)\,=\,2\Big(\mathrm{Vol}(\Omega_t)-\mathrm{Vol}_{hyp}(D_t)\Big)\,+\,\frac{1}{2} \int\limits_{\Sigma_t}(\mathrm{div}_b(g)-d\mathrm{tr}_b(g)) (\nu_b)\,d\mathcal{H}^2_b+O(e^{-\varepsilon t})\,.
\end{equation}
By Remark \ref{importantpropertyrenvol}, this equality yields
\begin{equation}
 \limsup_{t\to +\infty} Q_2(t)\,=\,\frac{1}{2}\,m_{VR}(g)\,.
\end{equation}

\smallskip

\textbf{Step 8:} {\em The positive volume-renormalized mass inequality, that is $m_{VR}(g)\geq0$, holds}

We know that $(t_0,+\infty)\subset \mathcal{T}$, for some $t_0\in (0,+\infty)$. By Step 2, we then have $$\lim_{t\to +\infty}F(t)\geq 0\,.$$
Putting together this last result with inequality \eqref{ffeq60}, Step 6 and Step 7, we obtain the positive volume-renormalized mass inequality.
\end{proof}

\begin{remark}\label{diffexpbutthesametheo}
Going through the previous proof, we see that the behavior of the error term in the asymptotic expansion of the Green function $\mathcal{G}_o$, which is necessary to obtain Step 7, depends on how rapidly the metric tends to hyperbolic metric in an asymptotically hyperboloidal map.
\end{remark}

To carry out the density argument and recover the positive mass theorem in the general case, we need the following key result that holds for generic asymptotically hyperbolic manifolds of dimension greater than or equal to $3$.

\begin{theorem}\label{Yamabe_lowreg}
Let $(M^{n+1},\hat{g})$ be an asymptotically hyperbolic manifold of class $C^{2,\alpha}$ of dimension $n+1 \geq 3$. Assume that $\hat{g}$ has scalar curvature $-n(n+1)$ in a neighborhood of the infinity boundary.
Fix $\delta\in (0,2)$ and let $g$ be another Riemannian metric on $M$ such that $g-\hat{g}\in C^{2,\alpha}_{\delta}(M; S^2 T^*M)$, where $ S^2 T^*M$ is the bundle of symmetric $(0,2)$--tensors. Then there is a unique positive function $\phi-1\in C^{2,\alpha}_{\delta}(M)$ such that the metric $\overline{g}=\phi^{\frac{4}{n-1}}g$ has constant scalar curvature $-n(n+1)$.
\end{theorem}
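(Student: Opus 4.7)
My plan is to reformulate the constant-scalar-curvature condition as a semilinear elliptic equation on weighted Hölder spaces and combine Lee's Fredholm theory with a ratio-maximum-principle argument. Writing $\phi = 1+\psi$ with $\psi > -1$, the conformal change law for scalar curvature turns $R_{\bar g} = -n(n+1)$ into
\begin{equation*}
F(\psi) \,:=\, -\tfrac{4n}{n-1}\Delta_g \psi + R_g(1+\psi) + n(n+1)(1+\psi)^{\frac{n+3}{n-1}} \,=\, 0.
\end{equation*}
The source term $F(0) = R_g + n(n+1)$ belongs to $C^{0,\alpha}_\delta(M)$: indeed, $R_{\hat g} \equiv -n(n+1)$ near infinity, and since $g - \hat g \in C^{2,\alpha}_\delta(M;S^2T^*M)$, the difference $R_g - R_{\hat g}$ inherits the same weighted decay (scalar curvature depends on two derivatives of the metric). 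The linearization at any $\psi$ with $1+\psi > 0$ reads
\begin{equation*}
dF_\psi(\eta) = -\tfrac{4n}{n-1}\Delta_g\eta + V_\psi \eta, \qquad V_\psi := R_g + \tfrac{n(n+1)(n+3)}{n-1}(1+\psi)^{4/(n-1)},
\end{equation*}
which is asymptotic at infinity to the model operator $\tfrac{4n}{n-1}(-\Delta_g + (n+1))$. The indicial roots of $-\Delta + (n+1)$ on $\HH^{n+1}$ are $-1$ and $n+1$ (so the indicial radius equals $(n+2)/2$), and by Lee's isomorphism theorem \cite[Theorem C]{Lee06} the map $dF_\psi\colon C^{2,\alpha}_\delta(M)\to C^{0,\alpha}_\delta(M)$ is Fredholm of index zero for every $\delta \in (-1, n+1)$, in particular for $\delta \in (0,2)$.

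Uniqueness follows from a clean ratio-maximum-principle argument. Given two positive solutions $\phi_1,\phi_2 \in 1+C^{2,\alpha}_\delta$, set $\bar g_1 = \phi_1^{4/(n-1)} g$ (which has scalar curvature $-n(n+1)$) and $\tilde\phi = \phi_2/\phi_1 \in 1+C^{2,\alpha}_\delta$. Since $\tilde\phi^{4/(n-1)}\bar g_1 = \phi_2^{4/(n-1)} g$ also has scalar curvature $-n(n+1)$, the conformal change formula gives
\begin{equation*}
\Delta_{\bar g_1}\tilde\phi = \tfrac{(n-1)(n+1)}{4}\big(\tilde\phi^{(n+3)/(n-1)} - \tilde\phi\big).
\end{equation*}
Because $\tilde\phi \to 1$ at infinity, if $\tilde\phi \not\equiv 1$ it attains a global maximum $>1$ or minimum $<1$ at some interior point $p$. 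At such a maximum one has $\Delta_{\bar g_1}\tilde\phi(p) \leq 0$ (with the paper's sign convention) while the right-hand side is strictly positive, a contradiction; the symmetric argument rules out an interior minimum $<1$. Hence $\tilde\phi \equiv 1$, and so $\phi_1 = \phi_2$.

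For existence, the strategy is to construct global barriers $0 < \phi_- \leq \phi_+$ in $1+C^{2,\alpha}_\delta$ and run a monotone iteration, using the Fredholm invertibility of the linearization (which follows once injectivity is established in the class of barriers by the same ratio argument as above) to update the iterate and to guarantee that the limit lies in $1 + C^{2,\alpha}_\delta$. To produce the barriers I would use Lee's isomorphism on the constant-coefficient model operator $\tfrac{4n}{n-1}(-\Delta + (n+1))$ to solve an auxiliary linear problem whose right-hand side captures the asymptotic part of $R_g + n(n+1)$, then add large compactly supported perturbations that dominate the pointwise values of $R_g + n(n+1)$ and the quadratic remainder of the nonlinearity on the compact core. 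As an alternative, one may solve the Dirichlet problem $F(\psi_k) = 0$ on a compact exhaustion $(M_k)$ via standard Yamabe theory on manifolds with boundary, and extract a convergent subsequence by uniform weighted Schauder estimates, with uniqueness on $M$ ensuring that the limit does not depend on the subsequence.

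The main obstacle is the existence step, specifically the construction of global barriers or the uniform control of the Dirichlet exhaustion: the source $R_g + n(n+1)$ decays at infinity only by $\delta$ powers of $\rho$, but its sign and magnitude on the compact core are completely unconstrained, so no small-data implicit function theorem applies directly, and one genuinely needs the nonlinear structure of the Yamabe-type equation (together with the negative sign of the target scalar curvature) to close the argument.
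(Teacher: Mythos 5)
Your setup of the conformal change equation, the decay of the source term $R_g+n(n+1)$, the computation of indicial roots, and the ratio-maximum-principle uniqueness argument are all correct and broadly match what the paper needs. However, the existence half of your argument has a genuine gap, which you yourself flag: you describe two possible strategies (global sub/supersolutions plus monotone iteration, or Dirichlet exhaustion plus uniform weighted Schauder estimates) but carry out neither. In particular, the key difficulty you identify -- that $R_g+n(n+1)$ is large and of unconstrained sign on the compact core, so no small-data argument applies and the barriers are not obvious -- is precisely what remains unresolved. A proof cannot end by pointing at the obstacle.

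The paper takes a genuinely different route that avoids constructing barriers altogether. It observes that the set $\mathcal{U}$ of metrics in $\mathcal{M}=\{g : g-\hat g\in C^{2,\alpha}_{\delta}\}$ that are $C^{2,\alpha}$-conformally compact is dense in $\mathcal{M}$, and on $\mathcal{U}$ the existence and uniqueness of the Yamabe solution is already known (cited from \cite[Proposition 4.3]{DaKrMc}). The work of the proof then goes into showing that the linearized operator $P_g$ at a solution is, after a conformal transformation to a constant-scalar-curvature representative, an isomorphism on $C^{2,\alpha}_{\delta}$ via Lee's theorem; this yields a locally uniform Lipschitz bound on the solution map $g\mapsto \phi_g$ over $\mathcal{U}$, which lets one extend the map continuously to all of $\mathcal{M}$. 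Your Fredholm analysis is essentially the right ingredient for this continuity estimate, but you deploy it in the wrong place (trying to set up a direct existence argument) rather than in a density-and-extension scheme. If you want to complete your proof without importing the paper's strategy, you would need to actually produce the global barriers or the uniform estimates for the exhaustion -- neither of which is a routine step here.
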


\begin{remark}
Observe that this extends the solution of the Yamabe problem to metrics which are conformally compact of lower regularity. Indeed, for $\delta$ and $g$ as in the theorem, the metric $g$  will only be $C^{k,\alpha}$-conformally compact, where $k=\lfloor \delta \rfloor\in \left\{0,1\right\}$ and $\alpha=\delta- \lfloor \delta \rfloor$.
\end{remark}
\begin{proof}[Proof of Theorem \ref{Yamabe_lowreg}]
Fix $\hat{g}$ as in the theorem and consider the set of Riemannian metrics
\begin{align*}
\mathcal{M}\,=\,\left\{
g\in C^{2,\alpha}(M; S^2_+T^*M)\,:\, g-\hat{g}\in C^{2,\alpha}_{\delta}(M; S^2 T^*M)\right\}\,.
\end{align*}
Let furthermore $\mathcal{U}$ be the set of metrics in $\mathcal{M}$ which are $C^{2,\alpha}$-conformally compact and note that $\mathcal{U}\subset \mathcal{M}$ is a dense subset. By \cite[Proposition 4.3]{DaKrMc}, the Yamabe problem can be solved on $\mathcal{U}$. Therefore, for every $g\in\mathcal{U}$, there exists a unique positive function $\phi$ with $\phi-1\in C^{2,\alpha}_{\delta}(M)$ satisfying
\begin{align}\label{eq:Yamabe_eq}
-\frac{4n}{n-1}\,\Delta_g\phi\,+\,R_{g}\phi\,+\,n(n+1)\phi^{\frac{n+3}{n-1}}\,=\,0
\end{align}
and consequently, $\overline{g}=\phi^{\frac{4}{n-1}}g$ has constant scalar curvature $-n(n+1)$. Denoting this solution by $\phi_g$, we see that the pair 
\begin{align*}
(h,\psi)\,=\,\frac{d}{dt}(g_t,\phi_{g_t})|_{t=0}
\end{align*}
satisfies the linearized equation
\begin{equation}
\begin{split}
\label{eq:linear_yamabe}
P_g\psi&=\,\frac{4n}{n-1}\left(\frac{d}{dt}\Delta_{g+th}|_{t=0}\right)\phi_g-\left(\frac{d}{dt}R_{g_t}|_{t=0}\right)\phi_g\\
&=\,\frac{4n}{n-1}\bigg(
\frac{1}{2}\langle \nabla^g\mathrm{tr}_gh-2\mathrm{div}_gh,\nabla^g \phi_g\rangle_g-\langle h,\nabla^g d\phi_g\rangle_g\bigg)\\
&\,\,\,\quad+\phi_g\bigg(\Delta_g(\mathrm{tr}_gh)-\mathrm{div}_g(\mathrm{div}_gh)+\langle \mathrm{Ric}_g,h\rangle_g\bigg)\,,
\end{split}
\end{equation}
where
\begin{align*}
P_g\,=\,-\,\frac{4n}{n-1}\,\Delta_g\,+\,R_{g}\,+\,n(n+1)\,\frac{n+3}{n-1}\,\phi_g^{\frac{4}{n-1}}\,.
\end{align*}
Because the operator $-4\frac{n}{n-1}\Delta_g+R_g$ is known to be conformal and due to the obvious relation 
$(\phi_{e^{2u}g})^{\frac{4}{n-1}}=e^{-2u}(\phi_g)^{\frac{4}{n-1}}$, the operator $P_g$ is conformal as well in the sense that
\begin{align}\label{eq:conf_transformation}
P_{e^{2u}g}\big(\psi\big)\,=\,e^{-\frac{n+3}{2}u}\,P_g\big(e^{\frac{n-1}{2}u}\psi\big)\,.
\end{align}
For a metric $\overline{g}$ with $R_{\overline{g}}=-n(n+1)$, we have
\begin{align*}
P_{\overline{g}}\,=\,\frac{4n}{n-1}\Big(\!-\Delta_{\overline{g}}\,+\,(n+1)\Big)\,:\,C^{2,\alpha}_{\delta}(M)\to C^{0,\alpha}_{\delta}(M)\,.
\end{align*}
By \cite{Lee06}, this is an isomorphism for all $\delta\in (-1,n+1)$, in particular for $\delta$ as in the theorem. 
For a metric of the form $e^{2u}\overline{g}$ for some
 $u\in C^{2,\alpha}_{\delta}(M)$,  \eqref{eq:conf_transformation} implies that $P_{g}:C^{2,\alpha}_{\delta}(M)\to C^{0,\alpha}_{\delta}(M)$ is also an isomorphism.
Because all metrics  $g\in\mathcal{U}$ are of that form, \eqref{eq:linear_yamabe} implies the estimate
\begin{align*}
\left\|\frac{d}{dt}\phi_{g+th}|_{t=0}\right\|_{C^{2,\alpha}_{\delta}(M)}\!\leq\, C\left\|h\right\|_{C^{2,\alpha}_{\delta}(M; \,S^2T^*M)}
\end{align*}
with a constant $C>0$ depending locally uniformly continuously on $g\in\mathcal{U}$ with respect to the $C^{2,\alpha}_{\delta}$-topology.
Because $\mathcal{U}$ is a dense subset of $\mathcal{M}$,
the continuous map $\mathcal{U} \ni g\mapsto \phi_{g}-1\in C^{2,\alpha}_{\delta}(M)$, extends to a continuous map on $\mathcal{M}$ so that $\phi_g$ solves \eqref{eq:Yamabe_eq} and $\overline{g}=(\phi_g)^{\frac{4}{n-1}}g$ has constant scalar curvature $-n(n+1)$. Uniqueness of this solution is a straightforward application of the maximum principle applied to \eqref{eq:Yamabe_eq}.
\end{proof}

Now, we are able to present a proof of Theorem \ref{thm:PMT}

\begin{proof}[Proof of Theorem \ref{thm:PMT}]
Up to minor modifications, this approximation argument is very similar to the proof of \cite[Theorem 4.8]{DaKrMc}. Let $\varphi$ be an asymptotically hyperboloidal map of order $\delta>1$. 
We still denote by $b$ an arbitrary complete Riemannian metric that coincides with $\varphi^* g_{hyp}$ in a neighborhood of $\partial M$.
Being $g-b\in C^{2,\alpha}_{\delta}(M; S^2 T^*M)$, by Theorem \ref{Yamabe_lowreg} we find a unique asymptotically hyperboloidal metric $\ol{g}$ conformal to $g$ with scalar curvature $\Ro\equiv -6$. 
By \cite[Proposition 3.6 and Theorem E]{DaKrMc}, $m_{VR}(g)\geq m_{VR}(\ol{g})$. It therefore suffices to show that $m_{VR}(\ol{g})\geq 0$.

Let $g_i$ be a sequence of metrics on $M$ converging to $\ol{g}$ in $C^{2,\alpha}_{\delta}(M; S^2 T^*M)$ such that $g_i-b$ is supported in a $b$-geodesic ball of radius $r_i$ with $r_i\to+ \infty$. Then in particular, the metrics $g_i$ are polyhomogeneous. From \cite[Theorem 1.7]{AndChr} it follows the existence of a unique asymptotically hyperbolic metric $\ol{g}_i\in [g_i]$, of class $C^\infty$ and conformal to $g_i$, which satisfies $\Ro_{\ol{g}_i}\equiv -6$. Therefore, the metrics $\ol{g}_i$ are polyhomogeneous and by \cite[Proposition 4.3]{DaKrMc}, they are also asymptotically hyperboloidal.

By Section \ref{SectEst}, there exists the minimal positive Green function $\mathcal{G}_o$ (which depends on $\ol{g}_i$) that vanishes at infinity. To conclude that each $m_{VR}(\ol{g}_i)\geq 0$, we proceed as follows. Since each $\ol{g}_i$ is asymptotically Poincar\'e-Einstein according to the definition in \cite{DaKrMc}, by the proof of \cite[Proposition 2.6]{DaKrMc}, we can construct an asymptotically hyperboildal chart $\widetilde{\varphi}$ of order $\delta=2$ such that $\rho = e^{-r}$ defines a geodesic boundary defining function for $\widetilde{\varphi}_*\, \ol{g}_i$ associated with the representative $\frac{1}{4} g_{\SSS^n}$ in the conformal class at infinity. 
Then, expansion \eqref{estgreenfunct} follows from \eqref{eq:asymptotic_phg} and Theorem \ref{thm:PMT_0version} implies that $m_{VR}(\ol{g}_i)\geq 0$.

As $g_i\to \ol{g}$ in $C^{2,\alpha}_{\delta}(M; S^2 T^*M)$, \cite[Proposition 4.3]{DaKrMc} implies that $\ol{g}_i\to \ol{g}$ in $C^{2,\alpha}_{\delta}(M; S^2 T^*M)$. On metrics $g$ of constant scalar curvature $-6$, $-m_{VR}(g)$ coincides with the renormalized Einstein-Hilbert action $S(g)$ which was introduced in \cite[Section 3]{DaKrMc}. Since $g\mapsto S(g)$ is continuous with respect to the $C^{2,\alpha}_{\delta}$-topology, we therefore get $m_{VR}(\ol{g}_i)=-S(\ol{g}_i)\to -S(\ol{g})=m_{VR}(\ol{g})$. This implies 
$m_{VR}(\ol{g})\geq 0$, as desired.

Suppose that $m_{VR}(g) = 0$. Then, $g=\ol{g}$ by \cite[Proposition 3.6 and Theorem E]{DaKrMc}, and $\ol{g}$ is a critical point of the map $g\mapsto m_{VR}(g)$ on the manifold $\mathcal{C}$ of metrics of constant scalar curvature $-6$. By \cite[Corollary 4.4]{DaKrMc}, $\Ric_{\ol{g}}=-2\ol{g}$. Since we are in dimension three, $\ol{g}$ is of constant sectional curvature $-1$. The conclusion then follows from \cite[Theorem 6.9]{AndHow1998}.
\end{proof}

\bibliographystyle{amsplain}
\bibliography{biblio}

\end{document}